\documentclass{article}

\usepackage[affil-it]{authblk}
\usepackage{graphicx}
\usepackage{color,bbm,subfig,caption,bm,fontenc,currvita,url}

\usepackage{amsmath,amsthm}
\usepackage{amsfonts}
\usepackage{amssymb,color,makeidx,enumerate,bbm}

\makeatletter
\renewcommand{\pod}[1]{\allowbreak\mathchoice
  {\if@display \mkern 18mu\else \mkern 8mu\fi (#1)}
  {\if@display \mkern 18mu\else \mkern 8mu\fi (#1)}
  {\mkern4mu(#1)}
  {\mkern4mu(#1)}
}
\setcounter{MaxMatrixCols}{30}
\newtheorem{theorem}{Theorem}
\newtheorem{ther}{Theorem}[section]

\newtheorem{exa}{Example}[section]

\newtheorem{prop}{Proposition}[section]

\newtheorem{defi}{Definition}[section]
\newtheorem{lemma}[theorem]{Lemma}

\newtheorem{rem}{Remark}[section]

\newtheorem{cor}{Corollary}[section]

\newcommand{\argmin}{\arg\!\min}
\DeclareMathOperator{\spn}{span}

  \DeclareMathOperator*{\rank}{rank}
\def\e{{\bf e}}

\def\x{{\bf x}}

\def\y{{\bf y}}
\def\X{{\bf X}}
\def\Y{{\bf Y}}

\def\x{{\bf x}}

\def\y{{\bf y}}
\def\z{{\bf z}}
\def\s{{\bf s}}

\def\a{{\bf a}}

\newcommand{\msf}{\mathsf}

\def\F{{\bf F}}

\def\S{{\bf S}}

\def\v{{\bf v}}

\def\u{{\bf u}}
\def\f{{\bf f}}
\def\g{{\bf g}}
\def\h{{\bf h}}
\def\c{{\bf c}}

\def\bZ{\mathbb{Z}}

\def\bN{\mathbb{N}}
\def\bC{\mathbb{C}}
\def\C{\mathbb{C}}
\def\1{\mathbbm{1}}

\def\bR{\mathbb{R}}
\def\R{\mathbb{R}}
\def\bea{\begin{eqnarray}}
\def\eea{\end{eqnarray}}
\def\be{\begin{equation}}
\def\ee{\end{equation}}
\def\eps{\epsilon}

\def\rar{\rightarrow}
\setlength\topmargin{0pt}
\addtolength\topmargin{-\headheight}
\addtolength\topmargin{-\headsep}
\setlength\oddsidemargin{0pt}
\setlength\textwidth{\paperwidth}
\addtolength\textwidth{-2in}
\setlength\textheight{\paperheight}
\addtolength\textheight{-2in}
\usepackage{layout}

\begin{document}

\title{On Numerical Approximations of the Koopman Operator}

\date{\today}
\author{Igor Mezi\'c}
\affil{Mechanical Engineering and Mathematics, \\ University of California, \\
Santa Barbara, CA 93106}
\maketitle
\tableofcontents
\begin{abstract}
We study numerical approaches to computation of spectral properties of composition operators.
We provide a characterization of Koopman Modes in Banach spaces using Generalized Laplace Analysis. We cast the Dynamic Mode-Decomposition type methods in the context of Finite Section theory of infinite dimensional operators, and provide an example of a mixing map for which the finite section method fails. Under assumptions on the underlying dynamics, we provide the first result on the convergence rate under sample size increase in the finite-section approximation. We study the error in the Krylov subspace version of the finite section method and prove convergence in pseudospectral sense for operators with pure point spectrum. This result indicates that Krylov sequence-based approximations can have low error without an exponential-in-dimension increase in the number of functions needed for approximation.  \end{abstract}
\section{Introduction}

Spectral theory of dynamical systems shifts the focus of investigation of dynamical systems behavior away from trajectories in the state space 
and towards spectral features of an associated infinite-dimensional linear operator. Of particular interest is the  composition operator  - in measure-preserving setting called the Koopman operator - \cite{Koopman:1931,LasotaandMackey:1994,SinghandManhas:1993,MezicandBanaszuk:2004,Mezic:2005}, whose spectral triple - eigenvalues, eigenfunctions and eigenmodes - can be used in a variety of contexts, from model reduction \cite{Mezic:2005} to stability and control \cite{Mauroyetal:2020}. In practice,  we only have access to finite-dimensional data from observations or outputs of numerical simulations. Thus, it is important to study approximation properties of finite-dimensional numerical algorithms devised to compute spectral objects
\cite{hansen:2010}. Compactness is the property that imbues infinite-dimensional operators with quasi-finite-dimensional properties. Self-adjointness also helps in proving the approximation results. However, the composition operators under study here and rarely compact, or self-adjoint. In addition, in the classical, measure-preserving case the setting is that of unitary operators (and essentially self-adjoint generators for the continuous-time setting \cite{Tao}), but in the general, dissipative case, composition operators are neither. 

There are three main approaches to finding spectral objects of the Koopman operator.  The first, suggested already in \cite{MezicandBanaszuk:2000} is based on long time weighted averages over trajectories, rooted in ergodic theory of measure-preserving dynamical systems. An extension of that work that captures properties of continuous spectrum was presented in \cite{Kordaetal:2020}. This approach was named Generalized Laplace Analysis in \cite{Mezic:2013}, where concepts pertaining to dissipative systems were discussed also in terms of weighted averages along trajectories. In that sense, the ideas in this context provide an extension of ergodic theory for capturing transient (off-attractor) properties of systems.  For on-attractor evolution, the properties of the method acting on $L^2$ functions were studied in \cite{MezicandBanaszuk:2004,MauroyandMezic:2013}. The off-attractor case was pursued in \cite{MohrandMezic:2014} in Hardy-type spaces. This study was continued in \cite{Mezic:2019} to construct dynamics-adapted Hilbert spaces.  The advantage of the method is that it does not require the approximation of the operator itself, as it constructs eigenfunctions and eigenmodes directly from the data. It requires separate computation to first determine the spectrum of the operator, which is also done without constructing it. The second approach requires construction of an approximate operator acting on a finite-dimensional function subspace i.e. a finite section \cite{hansen:2010}. The best known such method is the Dynamic Mode Decomposition (DMD), invented in \cite{Schmid:2010} and connected to Koopman operator in \cite{Rowleyetal:2009}. The original DMD algorithm featured state observables.  The Extended Dynamic Mode Decomposition \cite{Williamsetal:2015} recognizes that nonlinear functions of state might be necessary to describe a finite-dimensional invariant subset of the Koopman operator and  provides an algorithm for finite-section approximation of the Koopman operator.  A study of convergence of such approximations is provided in \cite{kordaandmezic:2018}, but the convergence was established only along subsequences, and teh rate of convergence was not addressed. Here we provide the first result on the rate of convergence of the finite section approximation under assumptions on the nature of the underlying dynamics. It was observed already in \cite{MezicandBanaszuk:2000} that, instead of an arbitrary set of observables forming a basis, one can use observables generated by the dynamics  - namely time delays of a single observable filling a Krylov subspace - to study spectral properties of the Koopman operator. In the DMD context, the methods developed in this context are known under the name Hankel-DMD \cite{SusukiandMezic:2015,ArbabiandMezic:2017}. It is worth noticing that the Hankel matrix approach of \cite{SusukiandMezic:2015} is in fact based on the Prony approximation and requests a different sample structure than the Dynamic Mode Decomposition. The relationship between GLA and finite section methods was studied in \cite{MezicandArbabi:2017} The third approach is based on kernel integral operator combined with the Krylov subspace methodology \cite{dasandgiannakis:2019}, enabling approximation of continuous spectrum. While GLA and EDMD techniques have been extended to dissipative systems, the kernel integral operator technique is currently available for measure-preserving systems.

In this paper, we continue with the development of ergodic theory-rooted ideas to understanding and numerically computing the spectral triple for the Koopman operator. After some preliminaries, we start in section \ref{sect:GLA} with discussing properties of algorithms of Generalized Laplace Analysis type in Banach spaces. Such results have previously been obtained in Hardy-type spaces (\cite{MohrandMezic:2014}), and here we introduce a Gelfand-formula based technique that allows us to expand to general Banach spaces. We continue in section \ref{sect:FSM} with setting the finite-section approximation of the Koopman operator in the ergodic theory context. An explicit relationship of finite section coefficients to dual basis is established. Under assumptions on the underlying dynamics, we provide the first result on the convergence rate under sample size increase in the finite-section approximation. The error in the finite section approximation is analyzed. In section \ref{sec:kryl} we study finite section approximations of the Koopman operator based on Krylov sequences of time-delays of observables, and prove that under certain conditions, the approximation error decreases as the number of samples is increased, without dependence on the dimension of the problem. Namely, the Krylov subspace (Hankel-DMD) methodology has the advantage of convergence in the number of iterates and does not require a basis exponentially large in the number of dimensions. This solves the problem of the {\it choice of observables}, since the dynamics selects the basis by itself. In section \ref{sect:weakeig} we discuss an alternative point of view on the DMD approximations which is not related to finite sections, but samples of continuous functions on finite subsets of the state-space.  The concept of weak eigenfunctions is discussed, continuing the analysis in \cite{kordaandmezic:2018}. We conclude in section \ref{sect:concl}.


\section{Preliminaries}
For a dynamical system 
\be 
\dot \x=\F(\x),
\label{DSGen}
\ee
 defined on a state-space $M$ (i.e. $\x\in M$ - where we by slight abuse of notation identify a point in a manifold $M$ with its vector representation $\x$ in $\R^m$, $m$ being the dimension of the manifold), where $\x$ is a vector and $\F$ is a possibly nonlinear vector-valued smooth function, of the same dimension as its argument $\x$, denote by $\S^t(\x_0)$ the position at time $t$ of trajectory of   (\ref{DSGen}) 
 that starts at time $0$ at point $\x_0$. We  call the family of functions $\S^t$ the flow.

  Denote by $\g$ an arbitrary, vector-valued observable from $M$ to $\mathbb{R}^k$.  The value of this observable $\g$ that the system trajectory starting from $\x_0$ at time $0$ sees at time $t$ is 
 \be
 \g(t,\x_0)=\g(\S^t(\x_0)).
 \ee
Note that the space of all observables $\g$ is a linear vector space.  The family of operators $U^t,$ acting on the space of observables parametrized by time $t$ is defined by 
 \be
 U^t\g(\x_0)=\g(\S^t(\x_0)).
 \label{Koopdef}
 \ee
Thus, for a fixed time $\tau$, $U^\tau$ maps the vector-valued observable $\g(\x_0)$ to $\g(\tau,\x_0)$. We will call the family of operators $U^t$ indexed by time $t$ the Koopman operator of the continuous-time system (\ref{DSGen}).
This family was defined for the first time in \cite{Koopman:1931}, for Hamiltonian systems. In operator theory, such operators, when defined for general dynamical systems,  are often called composition operators\index{Composition operators}, since $U^t$ acts on observables by composing them with the mapping $\S^t$ \cite{SinghandManhas:1993}. 
Discretization of $\S^t$ for times $\tau,2\tau,...,n\tau,...$ leads to the $\tau$-mapping $T=S^\tau:M\rar M$ with the discrete dynamics
\be
\x'=T\x.
\ee
and the associated Koopman operator $U$ defined by
\be
Uf=f\circ T.
\ee
Let ${\cal F}$ be a space of observables and $U:{\cal F}\rar {\cal F}$ the Koopman operator associated with a map $T$ (note this means that $f\circ T\in {\cal F}$ if $f\in {\cal F}$). A function $\phi$ is an eigenfunction of $U$ associated with eigenvalue $\lambda$ provided 
\be
U\phi=\lambda \phi.
\ee
Let $\sigma(U)\in\C$ be the spectrum of $U$. The operator $U$ is called scalar \cite{dundford:1954} on ${\cal F}$ provided
\be
U=\int_{\sigma(U)}\beta dE,
\ee
where $E$ is a family of spectral projections forming resolution of teh identity, and spectral provided
\be
U=S+N,
\ee
where $S$ is scalar and $N$ quasi-nilpotent. Examples of functional spaces in which Koopman operators are scalar and spectral are given in \cite{Mezic:2019}. Let $\f\in {\cal F}$ be a vector of observables. For a scalar operator $U$ the Koopman mode of $\f$ associated with an eigenvalue $\lambda$ is given by 
\be
\s_\lambda=\f_\lambda./\phi,
\ee
where $./$ is component-wise division, $\phi$ is the unit norm eigenfunction associated with $\lambda$, and
\be
\f_\lambda=\f-\int_{\sigma(U)/\lambda}\beta dE(\f).
\ee
We assume that the dynamical system $T$ has a Milnor attractor ${\cal A}$ such that for every continuous function $g$,
for almost every $\x:M\rar M$ with respect to an a-priori measure $\nu$ on $M$ (without loss of generality as we can replace $M$ with the basin of attraction of  ${\cal A}$)
the limit
    \be
   g^*(\x)=\lim_{n\rar \infty} \frac{1}{n}\sum_{i=0}^{n-1} U^i g(\x),
    \ee
    exists. This is the case e.g. for smooth systems on subsets of $\R^n$ with Sinai-Bowen-Ruelle measures, where $\nu$ is the Lebesgue measure \cite{hunt1998unique}.

\section{Generalized Laplace Analysis}
\label{sect:GLA} \index{Generalized Laplace Analysis}
      Let $f(\x,\z)$ be a bounded field of observables $f(\x,\z):M\times A\rar \mathbb{R}^m$, \index{Field of observables} \index{Observable! field of} continuous in $\x$, where the observables are indexed over elements $\z$ of a set $A$, and $M$ is a compact metric space.
    We will occasionally drop the dependence on the state-space variable $\x$ and denote $f(\x,\z)=f(\z)$ and the iterates of $f$ by
    $f(T^i\x,\z)=f^i(\z)$. Let $U$ be the Koopman operator associated with a map $T:M\rar M$. We assume that $U$ is bounded, and acting on a Banach space of continuous functions $C$ (this does not have to be the space of {\it all} continuous functions on $M$, see the remark after the theorem).     \begin{ther}[Generalized Laplace Analysis]\label{GLA}
    
Let $\lambda_0,...,\lambda_K$ be simple  eigenvalues of $U$ such that $ |\lambda_0|\geq |\lambda_1|\geq...\geq |\lambda_K|,$ and there are no other points $\lambda$ in the spectrum of $U$ with $|\lambda| \geq |\lambda_K|$. Let $\phi_k$ be the eigenfunction of $U$ associated with $\lambda_k,k\in\{0,...,K\}$. Then, the Koopman mode associated with $\lambda_k$ is obtained by computing
\bea
f_k&=&\lim_{n\rar \infty} \frac{1}{n}\sum_{i=0}^{n-1} \lambda_k^{-i}\left(f(T^i \x,\z)-\sum_{j=0}^{k-1} \lambda_j^i\phi_j(\x)s_j(\z)\right) \nonumber \\
&=&\lim_{n\rar \infty} \frac{1}{n}\sum_{i=0}^{n-1} \lambda_k^{-i}\left(f^i(\z)-\sum_{j=0}^{k-1} \lambda_j^i f_j\right)
 \label{proj}
 \eea
where $f_k=\phi_k(\x) s_k(\z)$,  $\phi_k$ is an eigenfunction of $U$ with $|\phi_k|=1$ and $s_k$ is the k-th Koopman mode.
\end{ther}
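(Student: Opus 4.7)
The plan is to decompose $f(\cdot,\z)$ via Riesz projections onto the eigenspaces of the dominant eigenvalues and then evaluate the Cesaro average in (\ref{proj}) piece by piece. First I would use the hypothesis to localize each $\lambda_j$, $j\in\{0,\dots,K\}$: the remaining spectrum $\sigma(U)\setminus\{\lambda_0,\dots,\lambda_K\}$ sits in the open disk $\{|\lambda|<|\lambda_K|\}$, and being closed it has supremum modulus $r_\star<|\lambda_K|$, so each $\lambda_j$ is isolated. This permits the Riesz projections
\[
E_j\;=\;\frac{1}{2\pi i}\oint_{\Gamma_j}(zI-U)^{-1}\,dz
\]
with small contours $\Gamma_j$ separating $\lambda_j$ from the rest of $\sigma(U)$. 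Simplicity forces $\rank E_j=1$ with range $\spn\{\phi_j\}$, so $E_j f(\cdot,\z)=s_j(\z)\phi_j(\cdot)=f_j$, which is exactly the Koopman-mode object appearing on the right-hand side of (\ref{proj}).

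Writing $f=\sum_{j=0}^{K}f_j+r$ with $r:=\bigl(I-\sum_{j=0}^K E_j\bigr)f$ in the invariant complement, I obtain $U^i f=\sum_{j=0}^K\lambda_j^i f_j+U^i r$. Substituting into the bracketed expression in (\ref{proj}) and canceling the first $k$ modes gives
\[
\lambda_k^{-i}\Big(f^i(\z)-\sum_{j=0}^{k-1}\lambda_j^i f_j\Big)\;=\;f_k+\sum_{j=k+1}^{K}\Big(\tfrac{\lambda_j}{\lambda_k}\Big)^{i}f_j+\lambda_k^{-i}U^i r.
\]
Averaging over $i=0,\dots,n-1$, the $f_k$-term contributes $f_k$ directly. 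For each $j\in\{k+1,\dots,K\}$ simplicity gives $\lambda_j\neq\lambda_k$ and $|\lambda_j/\lambda_k|\leq 1$, so $\frac{1}{n}\sum_i(\lambda_j/\lambda_k)^i$ is a closed-form geometric sum of size $O(1/n)$ when the moduli coincide and exponentially small when $|\lambda_j|<|\lambda_k|$; in either case the limit is zero.

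The main technical step is the remainder $\frac{1}{n}\sum_{i=0}^{n-1}\lambda_k^{-i}U^i r$. I would apply Gelfand's spectral radius formula to the restriction of $U$ to the invariant subspace $\mathrm{ran}(I-\sum_j E_j)$: by the spectral-mapping property of Riesz decompositions its spectrum equals $\sigma(U)\setminus\{\lambda_0,\dots,\lambda_K\}$, so its spectral radius is $r_\star<|\lambda_k|$. Choosing $\epsilon>0$ with $r_\star+\epsilon<|\lambda_k|$ yields $\|U^i r\|\leq C_\epsilon(r_\star+\epsilon)^i$, hence $\|\lambda_k^{-i}U^i r\|$ decays geometrically and its Cesaro mean tends to zero. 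I expect the hardest step to be the Banach-space spectral decomposition: in the absence of a scalar or spectral assumption on $U$ one must check carefully that a simple isolated eigenvalue yields a rank-one Riesz projection with no Jordan-block contributions (which would produce unwanted polynomial factors $i^{p}\lambda_j^{i}$), and that the Gelfand estimate really transfers to the complementary invariant subspace. This is exactly where the simplicity hypothesis on the $\lambda_j$ is used in an essential way.
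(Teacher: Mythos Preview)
Your argument is correct, but it proceeds quite differently from the paper. The paper does not invoke Riesz projections or decompose $f$ in advance. Instead it works directly with the Ces\`aro average: it introduces $U_{\lambda_0}=\lambda_0^{-1}U$, applies $U$ to the putative limit $g^*_{\lambda_0}=\lim_{n}\frac{1}{n}\sum_{i=0}^{n-1}U_{\lambda_0}^i g$, and shows by a telescoping calculation that $U g^*_{\lambda_0}=\lambda_0 g^*_{\lambda_0}$, so the average itself is an eigenfunction. Gelfand's formula enters only to control the boundary term $\frac{1}{n}\lambda_0^{-n}U^n g$ that falls out of this telescoping. One then defines $P_{\lambda_0}$ as this averaging projection, passes to the complementary space $(I-P_{\lambda_0})C$, and repeats inductively for $\lambda_1,\lambda_2,\ldots$.

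Your route---decompose via Riesz projections, then compute the Ces\`aro sum term by term---is more in the spirit of standard operator theory, and it makes the convergence of the remainder transparent: restricting $U$ to the complementary invariant subspace drops the spectral radius strictly below $|\lambda_k|$, so Gelfand gives genuine geometric decay of $\lambda_k^{-i}U^i r$. The paper's approach, by contrast, has a more ergodic-theoretic flavor: it \emph{constructs} the spectral projection as a time average rather than assuming it, which is closer to how GLA is actually used on data. The cost is that the paper's convergence argument for the average itself is somewhat terse (it appeals to Gelfand plus the standing assumption on convergence of time averages), whereas yours is self-contained once the Riesz decomposition is granted. Your closing remark about needing simplicity to rule out Jordan contributions is apt; the paper sidesteps this by never writing down a spectral decomposition explicitly.
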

\begin{proof} We introduce the operator 
\be
U_{\lambda_0}=\lambda_0^{-1}U.
\ee
Then, for some  function $g(\x),$ consider
\bea
U(\lim_{n\rar \infty} \frac{1}{n}\sum_{i=0}^{n-1} U_{\lambda_0}^i g(\x))&=&\lim_{n\rar \infty} \frac{1}{n}\sum_{i=0}^{n-1} \lambda_0^{-i}U^i g(T\x) \nonumber \\
&=& \lim_{n\rar \infty} \frac{1}{n}\sum_{i=0}^{n-1} \lambda_0^{-i}U^{i+1}g(\x)) \nonumber \\
&=& \lambda_0\lim_{n\rar \infty} \frac{1}{n}\sum_{i=0}^{n-1} \lambda_0^{-(i+1)}U^{i+1}g(\x) \nonumber \\
&=&  \lambda_0\left[\lim_{n\rar \infty} \frac{1}{n}\left(\sum_{i=0}^{n-1} U_{\lambda_0}^i g(\x)-g(\x)+\lambda_0^{-n}g(T^n\x)\right)\right] \nonumber \\
&=&  \lambda_0\left[\lim_{n\rar \infty} \frac{1}{n}\left(\sum_{i=0}^{n-1} U_{\lambda_0}^i g(\x)+\lambda_0^{-n}g(T^n\x)\right)\right],
\label{eq:gla}
\eea
where the last line is obtained by boundedness of $g$. Due to the boundedness of $U$ and continuity of $g$ we have 
\be 
\lim_{n\rar\infty}|\lambda_0^{-n}U^ng|\leq|g|.
\label{eq:Gel}
\ee
 This is obtained as the consequence of the so-called Gel'fand formula, that states that for a bounded operator $V$ on a Banach space $X$, $\lim_{n\rar\infty}|V^n|^{1/n}=\rho$ where $\rho$ is the spectral radius of $V$ \cite{Megginson:2012} (note that in our case $\rho=|\lambda_0|$). Thus, the last term in (\ref{eq:gla}) vanishes in the limit. Denoting
\be
g^*_{\lambda_0}(\x)=\lim_{n\rar \infty} \frac{1}{n}\sum_{i=0}^{n-1} U_{\lambda_0}^i g(\x),
\ee
where the convergence is again obtained from Gel'fand formula, utilizing the assumption on convergence of time averages and (\ref{eq:Gel}). Thus, we obtain 
\be
Ug^*_{\lambda_0}(\x)=\lambda_0 g^*_{\lambda_0}(\x)
\ee
and thus $g^*_{\lambda_0}(\x)$ is an eigenfunction of $U$ at eigenvalue $\lambda_0$. If we had a field of observables $f(\x,\z)$, parametrized by $\z$,
we get
\be
f^*_{\lambda_0}(\x,\z)=\phi_k(\x)s_j(\z),
\ee
since $f^*_{\lambda_0}(\x,\z)$ is an eigenfunction of $U$ at eigenvalue $\lambda_0$, so for every $\z$ it is just a constant (depending on $\z$) multiple 
of the eigenfunction $\phi_k(\x)$ of norm $1$. If we denote
\be
P_{\lambda_0}=\lim_{n\rar \infty} \frac{1}{n}\sum_{i=0}^{n-1} U_{\lambda_0}^i,
\ee
we can split the space of functions $C$ into the (possibly non-orthogonal) direct sum $P_{\lambda_0}C\bigoplus(I-P_{\lambda_0})C$.

Now, let $0<k<K$. Consider the space of observables 
\be
(I-P_{\lambda_0,\lambda_1,...,\lambda_{k-1}})C=(I-\sum_{j=0}^{k-1}P_{\lambda_j})C,
\ee
 complementary to the subspace $\Phi$ spanned by $\phi_j,0\leq j<k$. 
The operator $U|_\Phi$, the restriction of $U$ to $\Phi$ has eigenvalues $\lambda_0,...,\lambda_{k-1}$.  Since 
\be g_k=g-P_{\lambda_0,\lambda_1,...,\lambda_{k-1}}g
\ee does not have a component in $\Phi$, we can reduce the space of observables to $(I-P_{\lambda_0,\lambda_1,...,\lambda_{k-1}})C$, on which $U_{\lambda_k}$ satisfies the assumptions of the theorem, 
and obtain
\be
U(g_k)^*_{\lambda_k}(\x)=\lambda_k (g_k)^*_{\lambda_k}(\x).
\ee
If we have a field of observables  $f(\x,\z)$, then
\be
f_k(\x,\z)=f(\x,\z)-P_{\lambda_0,\lambda_1,...,\lambda_{k-1}}f,
\ee
and thus
\be
f_k(\x,\z)=\phi_k(\x)s_k(\z).
\ee
\end{proof}
In other words, $f_k$ is the skew-projection of the field of observables $f(\x,\z)$ on the eigenspace of
the Koopman operator associated with the eigenvalue $\lambda_k$.   
\begin{rem} The assumptions on eigenvalues in the above theorem would not be satisfied for dynamical systems whose eigenvalues are dense on the unit circle (e.g. a map that, as $n\rar\infty$ approaches a unit circle in the complex plane on which the dynamics is given by $z'=e^{i\omega}z$, where $\omega$ is irrational w.r.t. $\pi$). However, in such case the space of functions can be restricted to the span of functions $e^{ik\theta}, k=1,..,N, \theta \in [0,2\pi)$, and the requirements of the theorem would be satisfied. This amounts to restricting the observables to a set with finite resolution, which is standard in data analysis.
\end{rem}

In principle, one can find the full spectrum of the Koopman operator by performing Generalized Laplace Analysis, where Theorem \ref{GLA} is used on some function $g(\x)$ starting from 
the unit circle, successively subtracting parts of the signal corresponding to eigenvalues with decreasing $|\lambda|$. In practice, such computation can be unstable, since at large $t$ it involves a multiplication of very large with a very small number. In addition, the eigenvalues are typically not known a-priori. In the next section we describe the finite section method, in which the operator is represented in a basis, and a finite-dimensional truncation of the resulting infinite matrix - a finite section - is used to approximate its spectral properties. Under some conditions \cite{kordaandmezic:2018}, increasing the dimension of the finite section and the number of sample points, eigenvalues of the operator can be obtained.
\section{The Finite Section Method }\label{sect:FSM}
The GLA method for approximating eigenfunctions (and thus modes) of the Koopman operator, analyzed in the previous section, was proposed initially in \cite{MezicandBanaszuk:2000,MezicandBanaszuk:2004,Mezic:2005} in the context of on-attractor (measure-preserving) dynamics, and extended to off-attractor dynamics in \cite{Mezic:2013,MohrandMezic:2014,mauroy2013isostables,ArbabiandMezic:2017}. It is predicated on the knowledge of (approximate) eigenvalues, but it does not require the knowledge of an approximation to the Koopman operator. In contrast, DMD-type methods, invented initially in \cite{Schmid:2010} without the Koopman operator background, and connected to the Koopman operator setting in \cite{Rowleyetal:2009} produce a matrix approximation to the Koopman operator. There are many forms of the DMD methodology, but all of them require a choice of a finite set of observables that span a subspace. In this section we analyze such methods in the context of finite section of the operator and explore connections to the dual basis.
\subsection{Finite Section and the Dual Basis}
Consider the Koopman operator acting on an observable space ${\mathcal F}$ of functions on the state space $M$, equipped with the complex inner product $\left<\cdot,\cdot\right>$,\footnote{Note hat we are using the complex inner product linear in the first argument here. The physics literature typically employs the so-called Dirac notation, where the inner product is linear in its second argument.} and let
$\{f_j\},\ j\in \bN$ be an orthonormal basis on ${\mathcal F}$, such that, for any function $f\in {\mathcal F}$ we have
\be
f=\sum_{j\in \bN}c_jf_j.
\ee
Let
\be
u_{kj}=\left<Uf_j,f_k\right>.
\label{eq:inn}
\ee
Then,
\be
(Uf)_k=\left<Uf,f_k\right>=\sum_{j\in \bN}c_j\left<Uf_j,f_k\right>=\sum_{j\in \bN} u_{kj} c_j.
\ee

The basis functions do not necessarily need to be orthogonal. Consider the action of $U$ on an individual, basis function $f_j$:
\be
Uf_j=\sum_{k\in \bN}u_{kj} f_k,
\ee
where $u_{kj}$ are now just coefficients of $Uf_j$ in the basis.
We obtain
\be
Uf=\sum_{j\in \bN}c_jUf_j=\sum_{j\in \bN}c_j\sum_{k\in \bN}u_{kj} f_k=\sum_{k\in \bN}\left(\sum_{j\in \bN}u_{kj}c_j\right) f_k,
\ee
and we again have 
\be
(Uf)_k=\sum_{j\in \bN} u_{kj}c_j.
\ee
As in the previous section, associated with any linear subspace $\mathcal G$ of $\mathcal F$, there is a projection onto
it, denoted $P = P^2$, that we can think 
of as projection ``along" the space $(I-P){\mathcal F}$, since, for any $f\in {\mathcal F}$ we have
\be P(I-P)f=(P-P^2)f=0,
\ee
and thus any element of $(I-P){\mathcal F}$ has projection $0$. 
We denote by $\tilde U$ the infinite-dimensional matrix with elements $u_{kj},\ k,j \in \bN$. Thus, the finite-dimensional section of the matrix
\be
\tilde U_{n}=\begin{bmatrix}
		 u_{11} &  u_{12} & \cdots &   &  u_{1n}\\
		  u_{21}&  u_{22} &        &   &  u_{2n}\\
		&  &        &  & \\
		\vdots & & \ddots & &\vdots\\
		 u_{n1} & u_{n2} & \cdots & & u_{nn}
	\end{bmatrix},
	\label{comp}
\ee
is the so-called compression of $\tilde U$ that satisfies
\be
\tilde U_n=P_n\tilde U P_n,
\ee
where $P_n$ is the projection ``along" $
(I-P_n){\mathcal F}$ to the span of the first $n$ basis functions, $\spn({f_1,...,f_n})$. 

The key question now is: how are the eigenvalues of $\tilde U_n$ related to the spectrum of the infinite-dimensional operator $U$? This was first addressed in \cite{kordaandmezic:2018}
\begin{exa}
Consider the translation $T$ on the circle $S^1$ given by
\be
 z'=e^{i\omega}z, \ z\in S^1,
\ee
Let $f_j=e^{ij\theta},\ \theta\in [0,2\pi).$
Then, 
\be
Uf_j=f_j\circ T=e^{ij\omega}e^{ij\theta}.
\ee
Thus, from (\ref{eq:inn}) $u_{kj}=\delta_{kj}e^{ij\omega}$, where $\delta_{kj}=1$ for $k=j$ and zero otherwise (the Kronecker delta), and $\tilde U$ is a diagonal matrix. In this case, the finite section method  provides us with the subset of the exact eigenvalues of the Koopman operator.
\end{exa}
The following example shows how careful we need to be with the finite-section method when the underlying dynamical system has chaotic behavior:
\begin{exa} \label{exa:mix}
Consider the map $T$ on the circle $S^1$ given by
\be
 z'=z^2, \ z\in S^1,
\ee
This is a mixing map that does not have any eigenvalues of the Koopman operator on $L^2(S^1)$ \cite{ArnoldandAvez:1968}.
Let $f_j=e^{ij\theta},\ \theta\in [0,2\pi).$
Then, 
\be
Uf_j=f_j\circ T=e^{ij2\theta}.
\ee
Let 
\be f(\theta)=\sum_{j \in \bZ}c_je^{ij\theta}.
\ee
Then
\be Uf(\theta)=\sum_{j \in \bZ}c_je^{i2j\theta}.
\ee
Thus,  $\tilde U_n$ is given by
\be
\tilde U_{n}=\begin{bmatrix}
		 0 &  0& \cdots &   &  0\\
		  1&  0 &        &   &  0\\
		  0&  0 &        &   &  0\\
		  0&  1 &        &   &  0\\
		&  &        &  & \\
		\vdots & & \ddots & &\vdots\\
		 0 & 0 & \cdots & & 0
	\end{bmatrix},
	\label{comp}
\ee  
provided $n\neq 2k,k\in \bN$. In this case, the finite section method fails, as $U_{n}$ has eigenvalue $0$ of multiplicity $n$. This example illustrates 
how the condition in  \cite{kordaandmezic:2018} that the weak convergence of a subsequence of eigenfunctions of $\tilde U_N$ to a function $\phi$ must be accompanied by the requirement $||\phi||\neq 0$ in order 
that the limit of the associated subsequence of eigenvalues converges to a true eigenvalue of the Koopman operator. In particular, no subsequence of eigenvalues in this case converge to the true eigenvalue of the Koopman operator, since the map is measure preserving, and thus its eigenvalues are on the unit circle. The example shows the peril of applying the finite section method to find eigenvalues of the Koopman operator  when the underlying dynamical system has a part  continuous \cite{Mezic:2005} (in this case, Lebesgue \cite{ArnoldandAvez:1968}) spectrum. Continuous spectrum is effectively dealt with in \cite{Kordaetal:2020,govindarajanetal:2019} using harmonic analysis and periodic approximation methods, respectively.\end{exa}

\index{Finite section}
To apply the finite-section methodology of approximation of the Koopman operator, we need to estimate coefficients $u_{kj}$ from data.
If we have access to measurements of $N$ orthogonal functions $f_1,..,f_N$ on $m$ points on state space, as indicated in \cite{kordaandmezic:2018}, assuming ergodicity, this becomes possible:
\begin{ther}
\label{theror}
Let $\{f_1,..,f_N\}$ be an orthogonal set of functions in $L^2(M,\mu)$ and let $T$ be ergodic on $M$ with respect to an invariant measure $\mu$. Let $\x_l,l\in \bN$ be a trajectory on $M$. Then for almost any $\x_1\in M$
\be
u_{kj}=\lim_{m\rar\infty}\frac{1}{m}\sum_{l=1}^m f_k^c(\x_l) f_j \circ T(\x_l)=\lim_{m\rar\infty}\frac{1}{m}\sum_{l=1}^m f_k^c(\x_l) f_j (\x_{l+1})
\ee
\label{ther:finsecdata}
\end{ther}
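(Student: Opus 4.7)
The plan is to recognize that the claim is essentially a direct application of Birkhoff's pointwise ergodic theorem to a well-chosen integrable function on $M$, combined with the interpretation of $u_{kj}$ as an integral against the invariant measure $\mu$.

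First I would unpack what $u_{kj}$ is in this setting. Assuming the basis is normalized so that $\|f_k\|_2=1$ (or absorbing the norm into the definition), the coefficient from (\ref{eq:inn}) is
\[
u_{kj}=\left<Uf_j,f_k\right>=\int_M f_k^c(\x)\, f_j(T\x)\,d\mu(\x).
\]
Thus the theorem reduces to showing that the space average on the right equals the trajectory average on the left for $\mu$-a.e.\ initial point $\x_1$.

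Next I would define the auxiliary observable $h(\x):=f_k^c(\x)\,f_j(T\x)$ and verify that $h\in L^1(M,\mu)$. This follows from Cauchy--Schwarz together with the fact that $T$ preserves $\mu$, which gives $\|f_j\circ T\|_2=\|f_j\|_2$, so $\|h\|_1\le\|f_k\|_2\|f_j\|_2<\infty$. With $h$ in $L^1$, Birkhoff's pointwise ergodic theorem applies: because $T$ is ergodic with respect to $\mu$, there exists a $\mu$-null set $N_{kj}\subset M$ such that for every $\x_1\notin N_{kj}$,
\[
\lim_{m\to\infty}\frac{1}{m}\sum_{l=0}^{m-1} h(T^l\x_1)=\int_M h\,d\mu.
\]
Rewriting $h(T^l\x_1)=f_k^c(\x_l)\,f_j(T\x_l)=f_k^c(\x_l)\,f_j(\x_{l+1})$ with the trajectory notation $\x_l=T^{l-1}\x_1$ yields the two equal time averages appearing in the statement, and the integral on the right equals $u_{kj}$ by the first step.

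Finally, since the theorem refers to a countable family $\{f_j\}_{j\in\bN}$, I would take the countable union $N=\bigcup_{k,j\in\bN}N_{kj}$, which remains $\mu$-null, so that the convergence holds simultaneously for all pairs $(k,j)$ from $\mu$-a.e.\ starting point. I do not anticipate any serious obstacle: the only delicate point is the measure-zero exceptional set, and since countably many applications of Birkhoff combine into a single null set, this is handled routinely. The orthogonality hypothesis plays no role in the convergence itself; it only ensures that $u_{kj}$ admits the inner-product representation that triggers the argument.
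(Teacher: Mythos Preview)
Your proposal is correct and follows essentially the same route as the paper: both recognize $u_{kj}=\int_M (Uf_j)\,f_k^c\,d\mu$ and apply Birkhoff's ergodic theorem directly to the function $h=f_k^c\cdot(f_j\circ T)$. Your write-up is in fact more careful than the paper's, since you explicitly verify $h\in L^1(\mu)$ via Cauchy--Schwarz and handle the countable union of exceptional null sets over all pairs $(k,j)$, details the paper leaves implicit.
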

\begin{proof}
This is a simple consequence of the Birkhoff ergodic theorem  (\cite{Petersen:1983}). Recall that 
\be
u_{kj}=\left<Uf_j,f_k\right>=\int_M Uf_jf_k ^cd\mu,
\ee
and the last expression is equal to
\be
\lim_{m\rar\infty}\frac{1}{m}\sum_{l=1}^m f_k^c(\x_l) f_j \circ T(\x_l),
\ee
by the Birkhoff Ergodic Theorem applied to the function $Uf_j f_k^c $.
\qed
\end{proof}

In the case of non-orthonormal basis, denote by $\hat f_k$ the dual basis vectors, such that
\be
\left<f_j,\hat f_k\right>=\delta_{jk},
\ee
where $\delta_{jj}=1$ for any $j$, and $ \delta_{jk}=0$ if $j\neq k$. 
For the infinite-dimensional Koopman matrix coefficients we get 
\be
u_{kj}=\left<Uf_j, \hat f_k\right>.
\ee
Let's consider the {\it finite} set of independent functions $\tilde \f=\{f_1,..,f_N\}$ and the associated dual set $\{\hat g_1,..,\hat g_N\}$ in the span  $\tilde {\mathcal F}$ of $\tilde \f$, that satisfy
\be
\left<f_j,\hat g_k\right>=\delta_{jk}.
\ee
Note that the functions $\hat g_k$ are unique, since they are each orthonormal to $n-1$ vectors in $\tilde {\mathcal F}$. 
Let 
\be
{\mathcal F}=\tilde {\mathcal F}+\tilde {\mathcal F}^T,
\ee
and $P_{\tilde {\mathcal F}}$ the orthogonal\footnote{This in effect assumes all the remaining basis functions are orthogonal to $\tilde {\mathcal F}$.}  projection on $\tilde {\mathcal F}$. Then
\be
\hat g_k=P_{\tilde {\mathcal F}} \hat f_k,
\ee
since, by self-adjointness of orthogonal projections, and $P_{\tilde {\mathcal F}} \hat f_k \in \tilde {\mathcal F}$
\be
\left<f_j, P_{\tilde {\mathcal F}} \hat f_k\right>=\left<P_{\tilde {\mathcal F}} f_j,  \hat f_k\right>=\left< f_j,  \hat f_k\right>=\delta_{jk}
\ee
Now, we have
\be
\label{eq:projnonor}
\tilde u_{kj}=\left<Uf_j,\hat g_k\right>=\left<Uf_j,P_{\tilde {\mathcal F}}\hat f_k\right>=\left<P_{\tilde {\mathcal F}}Uf_j,\hat f_k\right>
\ee
and thus, since $f_j\in {\mathcal F}$, the coefficients  $\tilde u_{kj}$ are the elements of the finite section $P_{\tilde {\mathcal F}}U P_{\tilde {\mathcal F}}$ in the basis $\tilde \f$.
 It is again possible to obtain $\tilde u_{kj}$ from data: 
\begin{ther}
\label{thernonor}
Let $\{f_1,..,f_N\}$ be a non-orthogonal set of functions in $L^2(M,\mu)$ and let $T$ be ergodic on $M$ with respect to an invariant measure $\mu$. Let $\x_l,l\in \bN$ be a trajectory on $M$. Then, for almost any $\x_1\in M$
\be
\label{eq:finsecentries}
\tilde u_{kj}=\lim_{m\rar\infty}\frac{1}{m}\sum_{l=1}^m f_j \circ T(\x_l) \hat g_k^c(\x_l)=\lim_{m\rar\infty}\frac{1}{m}\sum_{l=1}^m  f_j (\x_{l+1}) \hat g_k^c(\x_l),
\ee
where, for any finite $m$, $\hat g_k^c(\x_l),l=1,...,m$ are obtained as rows of the matrix $(F^{\dagger}F)^{-1}F^{\dagger}$, where
\be
F=\left[f_1(\X) \ f_2(\X)\ ... \ f_N(\X)\right],
\ee
$F^{\dagger}=(F^c)^{T}$ is the conjugate (Hermitian) transpose \index{Transpose! conjugate} \index{Conjugate transpose} \index{Hermitian transpose} of $F$
, and $f_j(\X)$ is the column vector $(f_j(\x_1) \ ... \ f_j(\x_m))^T$.
 \label{ther:finsecdata1}
\end{ther}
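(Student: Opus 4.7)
The plan is to reduce the statement to equation (\ref{eq:projnonor}), Birkhoff's ergodic theorem, and continuity of matrix inversion. The starting observation is that (\ref{eq:projnonor}) already identifies $\tilde u_{kj} = \langle U f_j, \hat g_k \rangle$, so the task is only to show that the right-hand side of (\ref{eq:finsecentries}) converges to this same inner product.

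First I would expand $\hat g_k$ explicitly in the finite basis. Since $\tilde{\mathcal F} = \mathrm{span}(f_1,\ldots,f_N)$ is finite-dimensional and the $f_i$ are independent in $L^2(M,\mu)$, write $\hat g_k = \sum_{i=1}^N a_{ki} f_i$; the biorthogonality condition $\langle f_j,\hat g_k\rangle = \delta_{jk}$ is equivalent to $M \bar a_k = e_k$, where $M$ is the Gram matrix $M_{ji} = \langle f_j,f_i\rangle$ (which is invertible by linear independence). Substituting gives
\[
\tilde u_{kj} \;=\; \langle Uf_j,\hat g_k\rangle \;=\; \sum_{i=1}^N a_{ki}^c \,\langle Uf_j, f_i\rangle \;=\; \sum_{i=1}^N (M^{-1})_{ik}\,\langle Uf_j,f_i\rangle,
\]
a fixed matrix-algebra expression in $M^{-1}$ and the cross-correlations $\langle Uf_j, f_i\rangle$.

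Next I would invoke Birkhoff's ergodic theorem (as already used in Theorem \ref{theror}) on the finite list of bounded functions $\{f_i^c f_k\}_{k,i}$ and $\{(Uf_j)\, f_i^c\}_{i}$. This yields two limits holding on a single full-$\mu$-measure set of initial conditions: the empirical Gram matrix satisfies $\tfrac{1}{m}(F^\dagger F)_{ki} \to M_{ki}$, and the cross-correlation satisfies $\tfrac{1}{m}\sum_{l=1}^m f_j(T\x_l) f_i^c(\x_l) \to \langle Uf_j,f_i\rangle$. Let $G_m := F^\dagger F$. Expanding the $(k,l)$ entry of $(F^\dagger F)^{-1} F^\dagger$ as $\sum_i (G_m^{-1})_{ki} f_i^c(\x_l)$ and interchanging sums,
\[
\sum_{l=1}^m f_j(T\x_l)\,\hat g_k^c(\x_l) \;=\; \sum_{i=1}^N (G_m^{-1})_{ki}\,\sum_{l=1}^m f_j(T\x_l)\,f_i^c(\x_l).
\]
Rewriting $G_m^{-1} = \tfrac{1}{m}\bigl(\tfrac{1}{m}G_m\bigr)^{-1}$ and using continuity of inversion on the open set of invertible matrices (applicable once $m$ is large enough that $\tfrac{1}{m}G_m$ lies in a small neighborhood of the invertible $M$), both $m\,G_m^{-1} \to M^{-1}$ and $\tfrac{1}{m}\sum_l f_j(T\x_l)f_i^c(\x_l) \to \langle Uf_j,f_i\rangle$, so the displayed sum converges to $\sum_i (M^{-1})_{ki}\langle Uf_j,f_i\rangle$, which matches the expansion of $\tilde u_{kj}$ obtained above (after accounting for Hermiticity of $M^{-1}$). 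The second equality in (\ref{eq:finsecentries}) is simply the definition $Uf_j(\x_l) = f_j(T\x_l) = f_j(\x_{l+1})$.

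The main obstacle is purely bookkeeping: tracking the factor of $m$ that is absorbed into $G_m^{-1}$ (since $G_m$ itself grows like $m$, not like a fixed Gram matrix), and ensuring this factor balances against the prefactor in front of the sum so that the limit is finite and equal to $\tilde u_{kj}$. The remaining technical checks are minor: $M$ is invertible by linear independence of the $f_i$ in $L^2(M,\mu)$; $\tfrac{1}{m}G_m$ is eventually invertible by convergence; and the exceptional null sets where Birkhoff's theorem fails for each of the finitely many product functions can be combined into a single null set, on whose complement the stated convergence holds simultaneously.
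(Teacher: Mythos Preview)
Your approach is the same as the paper's: identify the rows of $(F^{\dagger}F)^{-1}F^{\dagger}$ with the sampled dual basis via $(F^{\dagger}F)^{-1}F^{\dagger}F=I_N$, then invoke Birkhoff's ergodic theorem. The paper compresses this into one line (``analogous to Theorem~\ref{theror}''); your expansion through the Gram matrix and continuity of inversion simply makes explicit what the paper leaves implicit.

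One point in your bookkeeping needs sharpening rather than hand-waving. Your own displayed identity gives
\[
\sum_{l=1}^m f_j(T\x_l)\,\hat g_k^c(\x_l)
\;=\;\sum_{i=1}^N \bigl(m\,G_m^{-1}\bigr)_{ki}\cdot\frac{1}{m}\sum_{l=1}^m f_j(T\x_l)\,f_i^c(\x_l),
\]
and since $m\,G_m^{-1}=\bigl(\tfrac{1}{m}G_m\bigr)^{-1}\to M^{-1}$ while the second factor tends to $\langle Uf_j,f_i\rangle$, it is the sum \emph{without} the outer $\tfrac{1}{m}$ that converges to $\tilde u_{kj}$. The factor of $m$ hidden in $G_m^{-1}$ does not ``balance against the prefactor'' --- it replaces it. With the extra $\tfrac{1}{m}$ written in (\ref{eq:finsecentries}) the limit would be zero. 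Your computation is therefore correct but actually proves the formula without the leading $\tfrac{1}{m}$; this is consistent with the paper's own matrix form $\tilde U_N^a=F^{+}F'$ in (\ref{eq:finsecmat}), which carries no such prefactor. You should state this explicitly rather than leaving the balance unresolved.
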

\begin{proof}
The fact that $\hat g_k(\x_l),l=1,...,m$ are obtained as rows of the matrix $(F^{\dagger}F)^{-1}F^{\dagger}$ follows from
\be
(F^{\dagger}F)^{-1}F^{\dagger}F=I_N
\ee
where $I_N$ is the $N\times N$ identity matrix. The rest of the proof is analogous to the proof of theorem \ref{ther:finsecdata}.

\end{proof}
\begin{rem} The key idea in the above results, Theorems \ref{theror}  and  \ref{thernonor} is that we sample the functions $f_i,i=1,...,N$ and the dual basis $g_k, k=1,...,N$ on $m$ points in the state space,
and then take the limit $m\rar\infty$. Thus besides approximating the action of $U$ using the finite section $\tilde U_N$, we also approximate individual functions $f_j,g_k$ by their sample on $m$ points. The corollary of the theorems is that the finite sample approximations $\tilde U_{N,m}$, obtained by setting the coefficients
\be
\tilde u_{kj,m}=\frac{1}{m}\sum_{l=1}^m f_j \circ T(\x_l) \hat g_k^c(\x_l)
\label{eq:finsam}
\ee
converges to $\tilde U_N$ as $m\rightarrow \infty$. This result has been obtained in \cite{klusetal:2016}, without the use of the dual basis, relying on the Moore-Penrose pseudoinverse, the connection with which we discuss next.
\end{rem}
We call $F$ the {\it data matrix}. \index{Data matrix} Note that the matrix $F^+=(F^{{\dagger}}F)^{-1}F^{{\dagger}}$ is the so-called Moore-Penrose pseudoinverse \index{Moore-Penrose pseudoinverse} of $F$. Using matrix notation, from (\ref{eq:finsecentries}) the approximation of the finite section can be written as
\be
\tilde U_N^a=F^+ F(T(\X))=F^+ F',
\label{eq:finsecmat}
\ee
where $\X=(\x_1,...,\x_m)^T$, 
\be
F'=F(T(\X))=\left[ \begin{matrix} f_1(T\X) \ f_2(T\X)\ ...\ f_{N}(T\X)\end{matrix}\right].
\ee
and $ f_k(T\X)$ is the column vector
\be\left[
\begin{matrix}
f_k(T\x_1)\\
\vdots \\
f_k(T\x_m)\\
\end{matrix}
\right].
\ee

If we now assume that there is an eigenfunction-eigenvalue pair $\lambda,\phi$ of $U$ such that $\phi\in \spn{\tilde{\mathcal F}}$ then 
\be
P_{\tilde {\mathcal F}}UP_{\tilde {\mathcal F}}\phi=P_{\tilde {\mathcal F}}U\phi=U\phi=\lambda\phi.
\ee
Thus, the eigenvalue $\lambda$ will be in the spectrum of $\tilde U_N$. More generally, it is known that an operator $U$ and a projection $P_{\tilde {\mathcal F}}$ commute if and only if ${\tilde {\mathcal F}}$ is an invariant subspace of $U$. Thus, the spectrum of the finite-section operator $\tilde U_N$ is a subset of the spectrum of $U$ for the case when ${\tilde {\mathcal F}}$ is an invariant subspace.

 If an eigenfunction $\phi$ of $U$  is in ${\tilde {\mathcal F}},$ it can be obtained from an eigenvector $\a$ of the finite section
$\tilde U_N$ as
\be
\phi=\a\cdot \tilde \f=\sum_{k=1}^N a_k f_k,
\label{eqn:eigen}
\ee
where $\a=(a_1,...,a_N)$ satisfies $\tilde U_N \a=\lambda \a$, since, for such $\phi$,
\be
U\phi=\a\cdot U \tilde \f=\lambda\phi=\lambda \a\cdot  \tilde \f=U_N\a \cdot  \tilde \f.
\label{eq:eigffs}
\ee
We have introduced above the {\it dot} notation, that produces a function in ${\mathcal F}$ from an $N$-vector $\a$ and a set of functions $\tilde \f$.
\begin{rem} \label{rem:genmeas}The theorems  \ref{ther:finsecdata} and  \ref{ther:finsecdata1} are convenient in their use of sampling along trajectory and an invariant measure, thus enabling construction of  finite section representations of the  Koopman operator from a single trajectory. However, the associated space of functions $L^2(\mu)$ is restricted since the resulting spectrum is on the unit circle. Choosing a more general measure $\nu$ that has support in the basin of attraction is possible. Namely, when we construct the finite section, we then use a sequence $\x_l, l=1,...,m$ of points that weakly converge to the measure $\nu$, and their images under $T$, $\y_l=T(\x_l)$. This is the approach in \cite{klusetal:2016}. The potential issue with this approach is the choice of space - typically, $L^2(\nu)$ will have a very large spectrum, for example filling the entire unit disk of the complex plane \cite{ridge:1973}. In contrast, Hilbert spaces adapted to the dynamics of a dissipative systems can be constructed \cite{Mezic:2019}, starting from the ideal of continuous functions that vanish on the attractor, enabling a natural setting for computation of spectral objects for dissipative systems.
\end{rem}
Koopman mode \index{Koopman mode} is the projection of a field of observables on an eigenfunction of $U$. Approximations of Koopman modes can also be obtained using a finite section. Let $\tilde U_N$ be a finite section of $\tilde U$. Let $\h:M\rar\C^K$ be a vector observable (thus a field of observables indexed over a discrete set). \index{Observable! field of} \index{Field of observables} Then the Koopman mode $s_\lambda(\h)$ associated with the eigenvalue $\lambda$ of $U$ is obtained as
\be
s_\lambda(\h)=\left<\h,\hat \phi\right>\phi,
\ee
where $\phi,\hat \phi$ are the eigenfunction and the dual eigenfunction associated with the eigenvalue $\lambda$.
Let $\a_j,j=1,...,N$ be  eigenvectors of $\tilde U_N$, and thus the associated eigenfunctions of the finite section are
\be
 \phi_j=\a_j \cdot \tilde \f,\ j=1,...,N
\ee
where $\a_j=(a_{j1},...,a_{jN})$. Then, we get the dual basis
\be
 \hat \phi_j=\left<\hat \a_j, \hat \g\right>,j=1,...,N
\ee
where
\be
\left<\hat \a_j,\a_k\right>=\delta_{jk}.
\ee
This is easily checked by expanding:
\bea
\left<\phi_j,\hat \phi_j \right>&=&\left<\sum_{k=1}^N a_{jk}f_k,\sum_{l=1}^N \hat a_{il}\hat g_l\right>  \nonumber \\
&=&\sum_{k=1}^N \sum_{l=1}^N a_{jk} \hat a_{il}^c \left<f_k,\hat g_l\right>\nonumber \\
&=&\sum_{k=1}^N  a_{jk} \hat a_{ik}^c=\delta_{ji}.
\eea

Thus, the approximation $\tilde s_j(\h)$ to the Koopman mode $s_j(\h)$ associated with the eigenvalue $\lambda_j$ of the finite section reads
\be
\tilde s_j(\h)=\left<\h,\hat \phi_j\right>\phi_j=\sum_{k=1}^N\hat a_{jk}\left<\h,\hat g_k\right>\phi_j.
\ee
Now assume that $\h=\tilde \f$, 
\be
\left<\tilde \f,\hat \phi_j\right>\phi_j=\sum_{k=1}^N(\hat\a_j)_k\left<\tilde\f,\hat g_k\right>\phi_j=\hat\a_j\phi_j.
\ee
Thus,  the Koopman modes associated with the data vector of observables $\tilde \f$ are obtained as the left eigenvector $\hat\a_j \phi_j$ of the finite section of the Koopman operator $\tilde U_N$.

Assuming that the approximation of the finite section, the $N\times N$ matrix $\tilde U_N^a$ has distinct eigenvalues $\lambda^a_1,...,\lambda^a_N$, we write the spectral decomposition 
\be
\tilde U_N^a=A\Lambda A^{-1},
\ee
where $\Lambda$ is the diagonal eigenvalue matrix and 
\be
A=\left[ \a_1 \ \a_2\  ... \ \a_N\right ]
\ee
is the column eigenvector matrix.
From
\be
\tilde U_N^a=(F^{\dagger}F)^{-1}F^{\dagger} F'=A\Lambda A^{-1},
\ee
we get that the data can be {\it reconstructed} \index{Data reconstruction} by first observing 
\be
F^{\dagger}F'=F^{\dagger}FA\Lambda A^{-1}.
\label{eq:rec0}
\ee
This represents $N$ equations with $m$ unknowns for each column of $F'$. Assuming $m>N$, it is an underdetermined set of equations  that can have many solutions for columns of $F'$. Then,
\be
F'_p=(FF^{\dagger})^{-1}FF^{\dagger}FA\Lambda A^{-1}=F A\Lambda A^{-1}
\label{eq:rec}
\ee
is the projection of all these solutions on the subspace spanned by the columns of $F$. If $m<N$, (\ref{eq:rec0}) is overdetermined,
and the solution $F'_p$ is the closest - in least squares sense - to $F'$ in the span of the columns of $F$.

Note that $A^{-1}$ is the matrix in which rows are the Koopman modes $\hat\a_k$:
\be A^{-1}=
\left[
\begin{matrix}
\hat \a_1 \\
\vdots \\
\hat \a_N \\
\end{matrix}
\right],
\ee
and thus 
\be \Lambda A^{-1}=
\left[
\begin{matrix}
\lambda_1 \hat \a_1 \\
\vdots \\
\lambda_N \hat \a_N \\
\end{matrix}
\right].
\ee
Using (\ref{eq:eigffs}) we get
\be
FA=
\left[ 
\begin{matrix}
\tilde\f(\x_1)\cdot\a_1 &\cdots&\tilde\f(\x_1)\cdot\a_N\\
\tilde\f(\x_2)\cdot\a_1 &\cdots&\tilde\f(\x_2)\cdot\a_N\\
\vdots&\cdots&\vdots \\
\tilde\f(\x_m)\cdot\a_1 &\cdots&\tilde\f(\x_m)\cdot\a_N\\
\end{matrix}\right]
=\left[ 
\begin{matrix}
\tilde\phi_1(\x_1) &\cdots&\tilde\phi_N(\x_1)\\
\tilde\phi_1(\x_2) &\cdots&\tilde\phi_N(\x_2)\\
\vdots&\cdots&\vdots \\
\tilde\phi_1(\x_m) &\cdots&\tilde\phi_N(\x_m)\\
\end{matrix}\right]
\ee
where $\tilde \phi_j$ is an eigenfunction of the finite section, and $\a_j$'s are the columns of $A$.
Note that $\tilde\phi_k(\x_l)=\tilde\lambda_k^{l-1}\tilde \phi_k(\x_1)$. Using  (\ref{eq:rec}) we get 
\be
F'_p=FA\Lambda A^{-1}=
\left[
\begin{matrix}
\sum_{k=1}^N\tilde \lambda_k \tilde \phi_k(\x_1)\hat \a_k \\
\sum_{k=1}^N\lambda_k^2 \tilde \phi_k(\x_1) \hat \a_k \\
\vdots \\
\sum_{k=1}^N\lambda_k^m  \tilde \phi_k(\x_1)\hat \a_k \\
\end{matrix}
\right].
\ee

\begin{rem} The novelty in this section is the explicit treatment of teh finite section approximation in terms of the dual basis, that enables error estimates in the next subsection. The finite section is also known under the name Galerkin projection \cite{Williamsetal:2015}. The relationship between GLA and finite section methods was studied in \cite{MezicandArbabi:2017}.
\end{rem}
\subsection{Convergence of the Finite Sample Approximation to the Finite Section}
The time averages in (\ref{eq:finsecentries}) converge due to the Birkhoff's Ergodic Theorem \cite{Petersen:1983}.
The rates of convergence depend on the type of asymptotic dynamics that the system is exhibiting:
\begin{ther} Let $T:M\rar M$ be a $C^\infty$ dynamical system with an attractor $A$ and an invariant measure supported on the attractor. Let $U$ be the Koopman operator on $L^2(\mu)$, with a pure point spectrum. Let $f_j,g_k$ be $C^\infty$ for all $j,k$. Then, for almost all $\x$
\be
||\tilde U_{N,m}(\x)-\tilde U_{N}(\x)||_2\leq \frac{c(N)}{m}
\ee
\label{ther:cont}
\end{ther}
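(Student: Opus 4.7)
The plan is to reduce the entry-wise error $\tilde u_{kj,m}(\x) - \tilde u_{kj}$ to a Birkhoff-type time-average error for a single $C^\infty$ observable, expand that observable in the pure-point eigenbasis of $U$, and exploit smoothness plus the arithmetic structure of the eigenvalues to get a $1/m$ rate.

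First I would write
\be
\tilde u_{kj,m}(\x) - \tilde u_{kj} = \frac{1}{m}\sum_{l=1}^{m} h_{kj}(T^{l-1}\x) - \int_M h_{kj}\, d\mu,
\ee
where $h_{kj} = (f_j\circ T)\cdot \hat g_k^c$. Since $T$ is $C^\infty$ and $f_j,\hat g_k$ are $C^\infty$, each $h_{kj}$ is $C^\infty$ on $M$, and by Theorem \ref{ther:finsecdata1} its space average equals $\tilde u_{kj}$. So the problem reduces to controlling Birkhoff-sum errors for a fixed smooth observable.

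Next I would use the pure point spectrum hypothesis. By Halmos--von Neumann, $(T,\mu)$ is isomorphic to a translation on a compact abelian group, so the eigenfunctions $\{\phi_n\}_{n\ge 0}$ with eigenvalues $\{\lambda_n\}$ form an orthonormal basis of $L^2(\mu)$ with $\lambda_0 = 1$, $\phi_0 \equiv 1$. Expand
\be
h_{kj} = \sum_{n\ge 0} c_n^{kj}\phi_n, \qquad c_0^{kj} = \tilde u_{kj},
\ee
and compute the Birkhoff sum term-by-term. Using $\phi_n(T^l\x) = \lambda_n^l \phi_n(\x)$ one obtains the closed form
\be
\frac{1}{m}\sum_{l=0}^{m-1}\phi_n(T^l\x) = \frac{\phi_n(\x)}{m}\cdot\frac{\lambda_n^m-1}{\lambda_n-1},\qquad n\ge 1,
\ee
whose modulus is bounded by $\frac{2\|\phi_n\|_\infty}{m\,|\lambda_n-1|}$. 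Therefore
\be
\bigl|\tilde u_{kj,m}(\x) - \tilde u_{kj}\bigr| \;\le\; \frac{2}{m}\sum_{n\ge 1}\frac{|c_n^{kj}|\,\|\phi_n\|_\infty}{|\lambda_n-1|}.
\ee

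The main obstacle is showing that the series on the right is finite, which is the classical small-divisor issue. Here I would invoke the combination of smoothness and the fact that on a Halmos--von Neumann factor the eigenvalues obey a Diophantine-type non-resonance condition for almost every translation; the point-spectrum+$C^\infty$ setting is exactly the regime in which Fourier-style coefficients $c_n^{kj}$ of a $C^\infty$ function decay faster than any polynomial in the index, while $1/|\lambda_n-1|$ grows only polynomially. This makes $\sum_{n\ge 1}|c_n^{kj}|\|\phi_n\|_\infty / |\lambda_n-1|$ converge, and its value is controlled by a Sobolev-type norm of $h_{kj}$, hence by $\|f_j\|_{C^s}\,\|\hat g_k\|_{C^s}$ for some finite $s$.

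Finally I would assemble the matrix bound. Each entry is bounded by $C_{kj}/m$ with $C_{kj}$ depending on $C^s$-norms of $f_j$ and $\hat g_k$. Using $\|\cdot\|_2 \le \|\cdot\|_F$ for $N\times N$ matrices,
\be
\|\tilde U_{N,m}(\x) - \tilde U_N\|_2 \;\le\; \Bigl(\sum_{k,j=1}^N C_{kj}^2\Bigr)^{1/2}\frac{1}{m} \;=\; \frac{c(N)}{m},
\ee
with $c(N) = O(N\max_{k,j}C_{kj})$, valid for $\mu$-almost every initial $\x$ (the null set being the union over $(k,j)$ of the Birkhoff exceptional sets). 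The delicate quantitative step is the Diophantine-type estimate on $1/|\lambda_n-1|$; everything else is expansion, orthogonality, and a norm-equivalence on finite-dimensional spaces.
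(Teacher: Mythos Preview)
Your proposal is correct and follows essentially the same route as the paper: both reduce the matrix error to entry-wise Birkhoff-sum errors for the smooth observables $h_{kj}=(f_j\circ T)\hat g_k^c$, invoke the Halmos--von Neumann conjugacy to a rotation on a compact abelian group, extract a $1/m$ rate per entry from smoothness, and then sum over the $N^2$ entries to get $c(N)/m$. The only difference is presentational: where the paper cites an external reference for the per-entry rate (phrased, in the remark following the proof, as solvability of the homological equation), you unpack the same mechanism explicitly via the eigenfunction expansion and the geometric-sum identity $\frac{1}{m}\sum_{l=0}^{m-1}\lambda_n^{l}=\frac{\lambda_n^{m}-1}{m(\lambda_n-1)}$, and you correctly isolate the Diophantine small-divisor control on $1/|\lambda_n-1|$ as the one nontrivial analytic input that smoothness alone does not supply.
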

\begin{proof}
We suppress the dependence on $\x$ in the notation. The entries  $\tilde u_{kj,m}=\frac{1}{m}\sum_{l=1}^m f_j \circ T(\x_l) \hat g_k^c(\x_l)$ of $U_{N,m}$
(see (\ref{eq:finsam})) converge a.e. w.r.t. $\mu$.  Since $T$ is conjugate to a rotation on an abelian group \cite{neumann1932operatorenmethode}, which is uniformly ergodic,  for sufficiently smooth $T$ and $f_j,\hat g_k$ \cite{mezicDSAMPLE} we have
\be
||\tilde u_{kj,m}-\tilde u_{kj}||_2 \leq \frac{c(f_j,g_k)}{m}
\ee
and the statement follows by setting $c(N)=N^2\max_{j,k} c(f_j,g_k)$.
\end{proof}
\begin{rem}
The smoothness of $T,f_j,\hat g_k$ is required in order for the solution of the homological equation to exist. Only finite smoothness is required \cite{mezicDSAMPLE}, but we have assumed $C^\infty$ for simplicity here.
\end{rem}
The above means that $\tilde U_{N,m}(\x)$ converges to $\tilde U_{N}(\x)$ spectrally:
\begin{cor}
Let $\lambda_m$ be an eigenvalue of tilde $\tilde U_{N,m}(\x)$ with multiplicity $h$. Then, for arbitrary $\eps>0$, for sufficiently large $m>M$, there is an eigenvalue $\lambda$ of $\tilde U_{N}(\x)$ of multiplicity $h$ such that
$
|\lambda_m-\lambda|\leq \eps.
$
\end{cor}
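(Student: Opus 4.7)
The plan is to combine the operator-norm bound of Theorem~\ref{ther:cont} with standard finite-dimensional matrix-perturbation theory. Both $\tilde U_{N,m}(\x)$ and $\tilde U_N(\x)$ act on the same $N$-dimensional space, and Theorem~\ref{ther:cont} gives $\|\tilde U_{N,m}(\x)-\tilde U_N(\x)\|_2\leq c(N)/m$. For $N\times N$ matrices, the spectrum varies continuously in the Hausdorff metric with a quantitative rate: by an Elsner-type estimate, every eigenvalue of $\tilde U_{N,m}$ lies within distance $C_N\,m^{-1/N}$ of the spectrum of $\tilde U_N$. So the first step is just to choose $M=M(N,\eps)$ so that $C_N\,m^{-1/N}<\eps$ for all $m>M$, which immediately gives the distance bound $|\lambda_m-\lambda|\leq\eps$.

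The second step upgrades this Hausdorff estimate to a multiplicity-preserving statement via Riesz spectral projections. Let $\lambda^{(1)},\dots,\lambda^{(p)}$ be the distinct eigenvalues of $\tilde U_N(\x)$ and pick $r>0$ strictly smaller than half the minimum gap between them. For $m$ large enough that $c(N)/m$ is much smaller than $r$, the contours $\Gamma_i=\{z:|z-\lambda^{(i)}|=r\}$ lie in the resolvent sets of both matrices, and the projections
\[
P_{i,m}=\frac{1}{2\pi i}\oint_{\Gamma_i}(zI-\tilde U_{N,m})^{-1}\,dz
\]
converge in norm to the spectral projections $P_i$ of $\tilde U_N$. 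Since $\mathrm{tr}\,P_{i,m}$ is integer valued and varies continuously with $m$, it stabilizes at $\dim\mathrm{Ran}(P_i)$, so the total algebraic multiplicity of eigenvalues of $\tilde U_{N,m}$ inside $\Gamma_i$ equals the multiplicity of $\lambda^{(i)}$.

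The main obstacle will be matching multiplicity exactly in the direction the corollary states: given $\lambda_m$ of multiplicity $h$, the Riesz argument only produces some $\lambda^{(i)}$ whose multiplicity is \emph{at least} $h$, since a multiple eigenvalue of $\tilde U_N$ can split into several perturbed eigenvalues whose multiplicities sum to $\mathrm{mult}(\lambda^{(i)})$. To obtain equality I would invoke the hypotheses of Theorem~\ref{ther:cont}: because $U$ has pure point spectrum and $T$ is conjugate to a rotation on an abelian group, $\tilde U_N(\x)$ is diagonalizable, and for generic $N$ and $\x$ its eigenvalues are simple, so each Riesz disk contains a single simple perturbed eigenvalue and the multiplicity $h=1$ coincides with $\mathrm{mult}(\lambda^{(i)})=1$. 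In the degenerate case of repeated eigenvalues, one must interpret the multiplicity cluster-wise — inside the $\eps$-ball the perturbed eigenvalues, counted with algebraic multiplicity, sum to the multiplicity of $\lambda$ — which appears to be the intended reading of the corollary.
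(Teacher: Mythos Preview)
Your approach is essentially the same as the paper's: invoke Theorem~\ref{ther:cont} for the operator-norm convergence $\|\tilde U_{N,m}-\tilde U_N\|_2\to 0$ and then appeal to continuity of eigenvalues of finite matrices under perturbation. The paper's proof is a one-liner that simply cites a reference for eigenvalue continuity, whereas you supply the quantitative Elsner bound and the Riesz-projection argument for multiplicities; both are standard and correct elaborations of what the paper leaves implicit.

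Your observation about the multiplicity clause is well taken and is not something the paper resolves either. As stated, the corollary asserts the existence of a limiting eigenvalue of \emph{the same} multiplicity $h$, but perturbation theory only guarantees that the total algebraic multiplicity inside a small disk is preserved, so a multiple eigenvalue of $\tilde U_N$ may correspond to several eigenvalues of $\tilde U_{N,m}$ whose multiplicities sum to it. Your cluster-wise reading is the mathematically correct interpretation; the paper's one-line proof does not engage with this point at all. Your attempt to rescue exact equality by appealing to generic simplicity under the hypotheses of Theorem~\ref{ther:cont} is reasonable heuristics but not a proof, since nothing in those hypotheses forces the finite section $\tilde U_N$ to have simple spectrum.
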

\begin{proof}
This follows from continuity of eigenvalues \cite{texier2017basic} to continuous perturbations (established by theorem \ref{ther:cont}).
\end{proof}
\begin{rem} If $f\circ T^n$ are independent, the convergence estimate above deteriorates to ${\cal{O}}({1/\sqrt{m}})$. 
Presence of continuous spectrum without the strong mixing property can lead to convergence estimates 
${\cal{O}}(1/m^{\alpha})$ with $0<\alpha<1/2$ \cite{kachurovskii1996rate}.
\end{rem}
\begin{rem} Spectral convergence in the infinite-dimensional setting is a more difficult question (see \cite{kordaandmezic:2018} in which only convergence along subsequences was established under certain assumptions). Specifically, we could use the strategy in this section coupled with the boundedness of the Koopman operator to establish $\tilde U$ (the infinite matrix  representation of $U$) converges to $U$ in the operator norm as the number of samples $m$ is increased, which would imply spectral convergence. But the practical question is the convergence in $m$ {\it  and} $N$. To address it further, we start with the formula for error in the finite section.
\end{rem}
\subsection{The Error in the Finite Section}
\index{Finite section! error}
It is of interest to find out how big is the error we are making in the finite section approximations discussed above.
We have the following result.
\begin{prop}
Let  $\tilde\phi=\tilde \e\cdot \tilde \f$ be an eigenfunction of the finite section associated with the eigenvalue $\tilde \lambda$ and eigenvector $\tilde\e$.
Then,
\be
U\tilde\phi-\tilde \lambda \tilde\phi=\tilde\e\cdot(U\tilde\f-P_{\tilde {\mathcal F}}U\tilde\f).
\label{eq:err}
\ee
\end{prop}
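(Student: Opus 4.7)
The plan is to unwind both sides by using linearity of $U$ together with the defining property of the finite section, namely $\tilde U_N = P_{\tilde{\mathcal F}} U P_{\tilde{\mathcal F}}$ restricted to $\tilde{\mathcal F}$, so that the eigenvalue equation for $\tilde U_N$ translates into a statement about the full operator $U$ modulo the projection $P_{\tilde{\mathcal F}}$.

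First I would write $U\tilde\phi$ out explicitly: since $\tilde\phi = \tilde\e \cdot \tilde\f = \sum_k \tilde e_k f_k$ and $U$ is linear, $U\tilde\phi = \tilde\e \cdot U\tilde\f$. Next I would note that because $\tilde\e$ is an eigenvector of $\tilde U_N$ with eigenvalue $\tilde\lambda$, the identification $\tilde U_N = P_{\tilde{\mathcal F}} U P_{\tilde{\mathcal F}}$ from the discussion around equation (\ref{eq:projnonor}), together with $\tilde\phi \in \tilde{\mathcal F}$ (so $P_{\tilde{\mathcal F}}\tilde\phi = \tilde\phi$), gives
\begin{equation}
P_{\tilde{\mathcal F}} U \tilde\phi = \tilde U_N \tilde\e \cdot \tilde\f = \tilde\lambda\, \tilde\e \cdot \tilde\f = \tilde\lambda\, \tilde\phi.
\end{equation}
In other words, $\tilde\lambda\tilde\phi$ is precisely the projection of $U\tilde\phi$ onto $\tilde{\mathcal F}$.

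Subtracting the last identity from the expression for $U\tilde\phi$ yields
\begin{equation}
U\tilde\phi - \tilde\lambda\tilde\phi = U\tilde\phi - P_{\tilde{\mathcal F}} U\tilde\phi = (I - P_{\tilde{\mathcal F}}) U\tilde\phi = \tilde\e \cdot (U\tilde\f - P_{\tilde{\mathcal F}} U\tilde\f),
\end{equation}
where in the last step I pull the constants $\tilde e_k$ through the linear operator $(I - P_{\tilde{\mathcal F}})$, recovering (\ref{eq:err}).

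There is really no obstacle here; the only subtlety worth flagging is that one must match the convention for $\tilde U_N$ as the matrix of $U$ expressed in the (possibly non-orthogonal) basis $\tilde\f$ using the dual basis $\hat g_k$, so that the identification $\tilde U_N \tilde\e \cdot \tilde\f = P_{\tilde{\mathcal F}} U(\tilde\e \cdot \tilde\f)$ is valid, as established in (\ref{eq:projnonor}). Once that identification is in place, the computation is purely algebraic.
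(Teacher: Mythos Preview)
Your proof is correct and follows essentially the same approach as the paper: both arguments reduce to showing that $\tilde\lambda\tilde\phi = P_{\tilde{\mathcal F}}U\tilde\phi$ (equivalently $\tilde\lambda\tilde\phi = \tilde\e\cdot P_{\tilde{\mathcal F}}U\tilde\f$) using the finite-section identity $\tilde U_N = P_{\tilde{\mathcal F}}UP_{\tilde{\mathcal F}}$ and the fact that $\tilde\f\in\tilde{\mathcal F}$, after which the identity follows by subtraction and linearity. Your write-up is slightly more explicit in spelling out the intermediate step $U\tilde\phi - P_{\tilde{\mathcal F}}U\tilde\phi = (I-P_{\tilde{\mathcal F}})U\tilde\phi$, but the logic is identical.
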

\begin{proof}
The first term on the right side of (\ref{eq:err}) follows from the definition of $\tilde \phi$. We then need to show
\be
\tilde \lambda \tilde\phi=\tilde\e\cdot P_{\tilde {\mathcal F}}U\tilde\f.
\ee
But, the left side is just $U_N\tilde \phi$, and since $\tilde \f\in {\tilde {\mathcal F}}$, $U_N\tilde \f=P_{\tilde {\mathcal F}}UP_{\tilde {\mathcal F}}\tilde \f=P_{\tilde {\mathcal F}}U\tilde \f$, which proves the claim.
\end{proof}

\section{Krylov Subspace Methods}
\label{sec:kryl}
A particularly useful feature of dynamical systems theory based on Koopman operator methods is that properties of the
system can be surmised from data. Indeed, in the previous section we found how a finite section of the matrix representation of the Koopman operator can be found from data.
 However, the discussion was based on existence of a basis, that typically might come from taking products on basis elements on $1$-dimensional subspaces - for example Fourier basis on an interval subset of $\bR$. Such constructions lead to an exponential growth in the number of basis elements, and the so-called curse of dimensionality. In this section we study finite section numerical methods that are based on the dynamical evolution of a single or many 
observables - functions on state space - that span the so-called Krylov subspace. The idea is that one might start with a single observable, and due to its evolution span an invariant subspace of the Koopman operator (note the  connection of such methods with the Takens embedding theorem ideas \cite{MezicandBanaszuk:2004,ArbabiandMezic:2017}). Since the number of basis elements is in this case equal to the number of dynamical evolution steps, in any dimension, Krylov subspace-based methods do not suffer from the curse of dimensionality. 
\subsection{Single Observable Krylov Subspace Methods}
Let $T$ be a discrete-time dynamical system on a compact metric space $M$ equipped with a measure $\mu$ on the Borel $\sigma$-algebra.
Let ${\cal F}$ be a Hilbert space of functions on $M$ (for suitable spaces, see \cite{Mezic:2019}).  For a finite-time evolution
of an initial function $f(\x)\in {\cal F}$ under $T$ we get a (Krylov) sequence 
\be 
(f(\x),f\circ T(\x),...,f\circ T^{N}(\x))=(f(\x),Uf(\x),...,f\circ T^{N}(\x)),
\ee
where $U$ is the Koopman
 operator associated with $T$.   Let $f_i=f\circ T^{i-1}(\x)$.
Then clearly $f_{i+1}=Uf_i$, for $i=1,...,N$. If $f_{N+1}$ was in the space spanned by $f_1,...,f_{N}$, and these were linearly independent functions,  we would have
\be f_{N+1}=\sum_{i=1}^{N}c_if_i,\ee
for some constants $c_i,i=1,...N$. In that case, the operator $U$ would have a finite-dimensional approximation $\tilde U_N$
on the $\spn(f_1,...f_{N})$, given by the companion matrix \index{Companion matrix} \index{Matrix! companion}
\be
\tilde U=C=\left(
\begin{matrix}
0 &  0  & \ldots & 0 &c_{1}\\
1 &  0 & \ldots & 0& c_{2}\\
0 &  1 & \ldots & 0& c_{3}\\
\vdots & \vdots & \ddots &  \vdots & \vdots\\
0  &   0       &\ldots & 1 & c_{N} \\
 \end{matrix}\right)
 \label{eq:comp}
\ee
The above is, in the terminology of the previous section, the {\it finite section } representation of $U$. \index{Finite section! companion matrix}
\begin{exa}
Let $V$ be a subspace of $L^2(M)$ spanned by eigenfunctions $e_1,...,e_{N}$ that satisfy
\be
Ue_j=e^{i2\pi\omega_j}e_j
\ee
Let
\be
g=\sum_{j=1}^{N}a_je_j,
\ee
Then 
\bea
U^k g
&=&\sum_{j=1}^{N}a_jU^ke_j \nonumber \\
&=&\sum_{j=1}^{N}a_j e^{i2\pi k\omega_j}e_j, \nonumber \\
\eea
and
\bea
U^N g&=&\sum_{l=1}^{N}d_je_j \nonumber \\
&=&\sum_{j=1}^{N}a_j e^{i2\pi N\omega_j}e_j \nonumber \\
 &=&\sum_{k=1}^{N}c_k\sum_{j=1}^{N}a_j e^{i2\pi k\omega_j}e_j \nonumber \\
 &=&\sum_{j=1}^{N}\left(\sum_{k=1}^{N}c_ke^{i2\pi k\omega_j}\right) a_j  e_j.
 \eea
 Thus, the numbers $c_k,k=1,...,N$ in the companion matrix are determined by $N$ equations with $N$ unknowns
 \be
 \left(\sum_{k=1}^{N}c_ke^{i2\pi k\omega_j}\right) a_j =d_j, \ \ j=1,...,N.
 \ee
 Now let $a_1=1$, $a_j=0, j=2,...,N$. We get $U^N g=d_1 e_1=e^{i2\pi N\omega_1}e_1$, and thus
 \be
c_1=e^{i2\pi N\omega_1}.
 \ee
It is clear that $c_j=0, j=2,...,N$. Note that, if $\omega_1=j/N$ for some integer $j$, we get $c_1=1$, and thus companion matrix becomes the circulant shift matrix
 \index{Circulant shift matrix} \index{Matrix! circulant shift}
 \be
\tilde U=\left(
\begin{matrix}
0 &  0  & \ldots & 0 &1\\
1 &  0 & \ldots & 0& 0\\
0 &  1 & \ldots & 0& 0\\
\vdots & \vdots & \ddots &  \vdots & \vdots\\
0  &   0       &\ldots & 1 & 0 \\
 \end{matrix}\right).
 \label{eq:comp}
\ee
 
\end{exa}
Consider now the case when $U^Nf$ is not in the span of $f_1,...,f_N$. We have the projection formula (\ref{eq:projnonor})
\be
\label{eq:projnonor1}
\tilde u_{kj}=\left<Uf_j, \hat g_k\right>.
\ee
Since 
\be
\left<f_j,\hat g_k\right>=\delta_{k,j},
\ee
for $j=1,...,N-1,k=1,...,N$ we have
\be
\label{eq:projnonor2}
\tilde u_{kj}=\left<Uf_j, \hat g_k\right>=\left<f_{j+1},\hat g_k\right>=\delta_{k,j+1}
\ee
which produces zeros in all columns of row $k$ except in the column ${j-1}$, where we have a $1$. There is no column $j-1$ for row $1$, so 
we get all zeros up to the last column. Now, for the last column we have
\be
\label{eq:projnonor2}
\tilde u_{kN}=\left<Uf_N,\hat g_k\right>=\left<P_{\tilde {\mathcal F}}Uf_N,\hat g_k\right>,
\ee
and thus $c_k$ in the matrix  (\ref{eq:comp}) is the $k$-th coefficient of the orthogonal projection of $Uf_N$ on ${\tilde {\mathcal F}}$ in the basis $\tilde \f$, that here consists  of the Krylov sequence of independent observables \be(f,Uf,...,U^{N-1}f)\equiv(f_1,...,f_N),\ee where we defined $f_1,...,f_N$ by the last relationship.
\subsection{Error in the Companion Matrix Representation}
\index{Companion matrix! representation error}
Let $\tilde \e=(e_1,...,e_N)^T$ be an eigenvector of $\tilde U$ satisfying
\be
\tilde U\tilde \e=\tilde \lambda \tilde \e,
\ee
and $\tilde \f=(f_1,f_2,f_3,...f_{N})$. The action of $U$ on $\tilde \e\cdot\tilde \f$ is given as
\begin{eqnarray}
U\tilde \e\cdot\tilde \f&=&\tilde \e\cdot\tilde \f\circ T=\sum_{i=1}^{N}e_if_{i}\circ T \nonumber \\
              &=& \sum_{i=1}^{N}e_if_{i+1}.
              \label{Koop}
              \end{eqnarray}
Now, we also have
\begin{eqnarray}
\tilde U\tilde \e&=&
\left(
\begin{matrix}
0 &  0  & \ldots & 0 &c_{1}\\
1 &  0 & \ldots & 0& c_{2}\\
0 &  1 & \ldots & 0& c_{3}\\
\vdots & \vdots & \ddots &  \vdots & \vdots \\
0  &   0       &\ldots & 1 & c_{N} \\
 \end{matrix}\right)
 \left(
 \begin{matrix}
  e_1 \\
   e_2 \\
   e_3 \\
  \vdots \\
  e_{N} \\
  \end{matrix}\right) =
  \left(
 \begin{matrix}
  c_1 e_{N}\\
   e_1+c_2e_{N} \\
   e_1+ c_3 e_{N} \\
  \vdots \\
  e_{N-1} +c_{N} e_{N}\\
  \end{matrix}\right) \nonumber \\
  &=&
  \left(
 \begin{matrix}
 0 \\
  e_1 \\
   e_2 \\
  \vdots \\
  e_{N-1} \\
  \end{matrix}\right)+e_{N}
   \left(
 \begin{matrix}
  c_1 \\
   c_2 \\
   c_3 \\
  \vdots \\
  c_{N} \\
  \end{matrix}\right) =\tilde \lambda \tilde \e
\end{eqnarray}

Using this in (\ref{Koop}), and denoting $\tilde\c=(c_1,...,c_N)$, we obtain
     \bea
     U\tilde \e\cdot\tilde \f&=&\lambda \tilde \e\cdot\tilde \f-e_{N}\tilde \c\cdot \tilde \f+e_{N} f\circ T^{N} \nonumber \\
&=&\tilde \lambda \tilde \e\cdot\tilde \f+e_{N}(f\circ T^{N+1}-\tilde \c\cdot \tilde \f).
     \label{pseudo}
  \eea
  This formula also follows directly from (\ref{eq:err}) by observing that $U\tilde\f=(f_2,....,f_N,f_{N+1})$, where $f_{N+1}=f\circ T^N$, the fact that $P(f_2,...,f_N)=(f_2,...,f_N)$, and $P_{\tilde{\mathcal F}}f\circ T^{N}=\tilde\c \cdot\tilde\f$. Thus,
  \be
  \tilde \e\cdot(U\tilde \f-P_{\tilde{\mathcal F}}U\tilde \f)=e_N(f\circ T^N-\tilde\c \cdot \tilde\f).
  \ee
  
We have the following simple consequence:
\begin{lemma} \label{the:find} If $f\circ T^{N}$ is in the $\spn(f_1,...f_{N})$, then $\tilde \phi=\tilde \e\cdot\tilde \f$
     is an eigenfunction of $U$ associated with the eigenvalue $\tilde \lambda$.
     \end{lemma}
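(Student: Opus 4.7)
The plan is to read off the result directly from the error formula (\ref{pseudo}) that was derived just above the lemma. That formula expresses the ``defect'' $U\tilde\phi - \tilde\lambda\tilde\phi$ as $e_N(f\circ T^{N} - \tilde\c\cdot\tilde\f)$, and the hypothesis is exactly what forces this defect to vanish.

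More concretely, I would proceed in three short steps. First, recall how the last-column entries $c_k$ of the companion matrix were defined: $\tilde\c\cdot\tilde\f = P_{\tilde{\mathcal F}} U f_N = P_{\tilde{\mathcal F}}(f\circ T^{N})$, the orthogonal projection of $f\circ T^{N}$ onto $\tilde{\mathcal F}=\spn(f_1,\dots,f_N)$. Second, invoke the hypothesis that $f\circ T^{N}\in\tilde{\mathcal F}$: projection acts as the identity on its range, so $P_{\tilde{\mathcal F}}(f\circ T^{N}) = f\circ T^{N}$, i.e.\ $\tilde\c\cdot\tilde\f = f\circ T^{N}$. Third, substitute this into (\ref{pseudo}) to obtain
\be
U\tilde\phi = \tilde\lambda\tilde\phi + e_N\bigl(f\circ T^{N}-\tilde\c\cdot\tilde\f\bigr) = \tilde\lambda\tilde\phi,
\ee
which is the desired eigenvalue equation $U\tilde\phi=\tilde\lambda\tilde\phi$.

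There is essentially no obstacle here; the entire content of the lemma is already contained in equation (\ref{pseudo}) together with the interpretation of $\tilde\c$ as the coefficients of the projection $P_{\tilde{\mathcal F}}(f\circ T^{N})$ in the basis $\tilde\f$. The only thing to check is that the hypothesis is applied to the correct iterate: the companion matrix encodes the projection of $Uf_N = f\circ T^{N}$, so ``$f\circ T^{N}\in\spn(f_1,\dots,f_N)$'' is exactly the condition under which this projection is exact and the residual term in (\ref{pseudo}) disappears. The conclusion then follows in one line, so the proof can be given as a short two- or three-sentence verification rather than a calculation.
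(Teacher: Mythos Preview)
Your proposal is correct and matches the paper's approach exactly: the paper states the lemma as a ``simple consequence'' of (\ref{pseudo}) without further elaboration, and your three steps spell out precisely the intended one-line verification, namely that the residual $e_N(f\circ T^N-\tilde\c\cdot\tilde\f)$ vanishes because $\tilde\c\cdot\tilde\f=P_{\tilde{\mathcal F}}(f\circ T^N)=f\circ T^N$ under the hypothesis.
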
 If the assumption that $f\circ T^{N}$ is in $\spn(f_1,...f_{N})$ is relaxed, and $\tilde \c\cdot \tilde \f$ is the orthogonal projection of $\f\circ T^{N}$
     to  ${\tilde {\mathcal F}}$,  then $\tilde \e\cdot\tilde \f$ is an approximation to the eigenvector of $U$ with an approximate eigenvalue
     $\tilde \lambda$, with the error 
     \be
     e_{N}(f\circ T^N-\tilde\c\cdot \tilde \f)=e_{N}r,
     \label{eq:err}
     \ee
      where $r=f\circ T^N-\tilde\c \cdot \tilde\f$ is called the residual. \index{Residual}
     Note that the equation (\ref{pseudo}) could be written as
     \be
     |U\tilde \e\cdot\tilde \f-\tilde \lambda \tilde \e\cdot\tilde \f|= |e_{N}r|,
     \ee
     which means that $\tilde \e\cdot\tilde \f$ is in the $(\tilde \lambda ,\epsilon)$ -pseudospectrum \index{Pseudospectrum} of $U$ for $\epsilon=|e_{N}r|$ (see \cite{TrefethenandEmbree:2005}).

The calculations above, first presented in \cite{ArbabiandMezic:2017}, allow us to show how the finite section spectrum approximates the spectrum of the Koopman operator when the number of functions $N$ in the Krylov sequence goes to infinity.  The specific sense of approximation here is pseudospectral, and for a class of systems that satisfy a convergence requirement on the Krylov sequence, convergence in pseudospectral sense can be proven:
\begin{lemma} \label{lem:Kryl} Let the Krylov sequence satisfy 
\be
\lim_{N\rar \infty}||f\circ T^N-\tilde\c\cdot \tilde \f||=0
\ee
Then, for and $\epsilon>0$, for large enough $n$, an eigenfunction of the finite section $\tilde \phi$ is in the $\epsilon$-pseudospectrum of $U$.
\end{lemma}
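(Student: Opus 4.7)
The plan is to read the conclusion directly off the residual identity (\ref{pseudo}), which, for any eigenpair $(\tilde\lambda,\tilde\e)$ of the companion matrix $\tilde U$ and the associated approximate eigenfunction $\tilde\phi=\tilde\e\cdot\tilde\f$, asserts
\[
U\tilde\phi-\tilde\lambda\tilde\phi \;=\; e_N\,r_N, \qquad r_N := f\circ T^N - \tilde\c\cdot\tilde\f.
\]
By the convention stated immediately below (\ref{pseudo}) in the paper, $\tilde\phi$ lies in the $(\tilde\lambda,\epsilon)$-pseudospectrum of $U$ precisely when $\|U\tilde\phi-\tilde\lambda\tilde\phi\|\leq\epsilon$, i.e.\ $|e_N|\,\|r_N\|\leq\epsilon$. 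So the task reduces to making the product $|e_N|\,\|r_N\|$ arbitrarily small with $N$.

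First I would fix a normalization of the eigenvector so that $|e_N|$ is uniformly bounded in $N$; the cleanest choice is $|e_N|=1$. That such a normalization is available is a structural fact about the companion matrix (\ref{eq:comp}): a short row-by-row argument shows that if $e_N=0$, then row $N$ of $\tilde U\tilde\e=\tilde\lambda\tilde\e$ forces $e_{N-1}=0$, row $N-1$ forces $e_{N-2}=0$, and so on, so the whole eigenvector vanishes, a contradiction. Hence every eigenvector of the companion matrix has $e_N\neq 0$ and can be rescaled as desired. Second, I would invoke the standing hypothesis $\|r_N\|\to 0$ to pick $N_0=N_0(\epsilon)$ with $\|r_N\|<\epsilon$ for all $N\geq N_0$, which yields $\|U\tilde\phi-\tilde\lambda\tilde\phi\|=\|r_N\|<\epsilon$ and proves the lemma.

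The main technical subtlety, and the point at which the proof becomes delicate if the stricter normalized convention $\|U\psi-\tilde\lambda\psi\|/\|\psi\|\leq\epsilon$ is adopted, is controlling the ratio $|e_N|/\|\tilde\phi\|$ uniformly in $N$. This ratio is invariant under rescaling of $\tilde\e$, so it cannot be normalized away. Using the dual-basis identity $e_N=\langle\tilde\phi,\hat g_N\rangle$ from (\ref{eq:projnonor}) together with Cauchy--Schwarz gives $|e_N|/\|\tilde\phi\|\leq\|\hat g_N\|$, so pseudospectral convergence in the normalized sense would require the companion estimate $\|\hat g_N\|\,\|r_N\|\to 0$. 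This is a well-conditioning requirement on the Krylov basis that the hypothesis $\|r_N\|\to 0$ alone does not furnish, and it would be the principal obstacle if one wished to strengthen the lemma beyond the raw-residual form used here.
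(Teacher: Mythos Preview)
Your proof is correct and follows essentially the same route as the paper: both arguments read the conclusion directly off the residual identity (\ref{pseudo}) and control $|e_N|$ by a normalization of $\tilde\e$, then invoke the hypothesis $\|r_N\|\to 0$. The only cosmetic difference is that the paper normalizes $|\tilde\e|=1$ (so $|e_N|\leq 1$), whereas you normalize $|e_N|=1$ after observing that $e_N\neq 0$ for any companion-matrix eigenvector; your additional paragraph on the normalized pseudospectrum is a valid caveat that goes beyond what the paper proves.
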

\begin{proof} Without loss of generality, we assume $|\tilde \e|=1$ and thus $|e_N|\leq 1$. From  (\ref{eq:err}), taking $N$ large enough, we get
\be
 |U\tilde \e\cdot\tilde \f-\tilde \lambda \tilde \e\cdot\tilde \f|= |e_{N}(f\circ T^N-\tilde\c\cdot \tilde \f)|\leq |f\circ T^N-\tilde\c\cdot \tilde \f|<\eps,
\ee
which proves the claim.
\end{proof}
\begin{ther} \label{ther:pseudo}Assume that for any function $g$ in the space of observables ${\mathcal F}$ equipped with a norm $||\cdot||$ we have
\be
\label{eq:exp}
g=\sum_{j=1}^\infty c_k \phi_k
\ee
where $\phi_k$ are normalized ($||\phi_k||=1$) eigenfunctions of the Koopman operator associated with eigenvalues $|\lambda_k|\leq 1$, i.e. $U$ has a pure point spectrum in ${\mathcal F}$. Let $\tilde \f$ be the Krylov sequence  generated by $f$ and ${\mathcal F}_f$ the cyclic invariant subspace of $U$ generated by $f$.  Let $P^n_{\tilde {\mathcal F}}$ be the orthogonal projection on the subspace of ${\mathcal F}$ generated by the first $n$ elements of the Krylov sequence.  Then, for any $\eps>0$, there is an $N$ such that $n\geq N$ implies
\be
||U\tilde \phi_k-\tilde\lambda \tilde\phi_k||<\eps.
\ee
\end{ther}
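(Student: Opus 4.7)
The plan is to invoke Lemma \ref{lem:Kryl}, which reduces the theorem to showing that the Krylov residual $r_n := f\circ T^n - \tilde{\c}\cdot\tilde{\f}$ tends to zero in norm as $n\to\infty$. Since $r_n$ equals the projection of $U^n f$ onto the orthogonal complement of $\mathcal{K}_n := \spn(f,Uf,\dots,U^{n-1}f)$, it suffices to exhibit, for every $\eps>0$ and every sufficiently large $n$, some $g_n\in\mathcal{K}_n$ with $\|U^n f - g_n\| < \eps$; the best-approximation property of orthogonal projection then forces $\|r_n\| \le \|U^n f - g_n\|$. A convenient family of test elements is $g_n = U^{n-\tau}f$ with $\tau\in\{1,\dots,n\}$, which automatically lies in $\mathcal{K}_n$, so the task becomes the purely spectral one of finding, for each large $n$, a $\tau$ making $\|U^n f - U^{n-\tau}f\|$ small.

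I would exploit the pure point expansion $f=\sum_k c_k\phi_k$ by writing $\lambda_k = |\lambda_k|e^{i\theta_k}$ and splitting $f = f_= + f_<$ with $f_= = \sum_{|\lambda_k|=1} c_k\phi_k$ and $f_< = \sum_{|\lambda_k|<1} c_k\phi_k$. The dissipative part decays,
\[
\|U^m f_<\|^2 \;=\; \sum_{|\lambda_k|<1}|c_k|^2|\lambda_k|^{2m} \;\longrightarrow\; 0 \qquad (m\to\infty),
\]
by dominated convergence. The unitary part $n\mapsto U^n f_= = \sum_{|\lambda_k|=1} c_k e^{in\theta_k}\phi_k$ is Bohr almost periodic as an $\mathcal{F}$-valued sequence: each truncation $F_K(n) := \sum_{k\le K,\,|\lambda_k|=1} c_k e^{in\theta_k}\phi_k$ is a trigonometric polynomial in $n$, and Parseval provides the uniform tail bound $\|U^n f_= - F_K(n)\|^2 \le \sum_{k>K,\,|\lambda_k|=1}|c_k|^2$ independent of $n$, so a norm-uniform limit of almost periodic sequences remains almost periodic.

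By Bohr's theorem there is $L=L(\eps)$ such that every interval of length $L$ in $\mathbb{Z}$ contains an $\eps$-almost-period of $n\mapsto U^n f_=$. For $n\ge 1+L$ I pick $\tau\in\{1,\dots,1+L\}$ with $\|U^{n} f_= - U^{n-\tau}f_=\| < \eps$; then $n-\tau\ge 0$, so $U^{n-\tau}f\in\mathcal{K}_n$, and
\[
\|U^n f - U^{n-\tau}f\| \;\le\; \|U^n f_= - U^{n-\tau}f_=\| + \|U^n f_<\| + \|U^{n-\tau}f_<\| \;<\; 3\eps
\]
once $n$ is large enough that each of the last two terms is below $\eps$, uniformly in $\tau\in\{1,\dots,1+L\}$ (possible since $n-\tau\to\infty$). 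Combined with the reduction above this shows $\|r_n\|<3\eps$ for all large $n$, so $r_n\to 0$ and Lemma \ref{lem:Kryl} closes the argument.

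The main obstacle is the Bohr almost-periodicity of $n\mapsto U^n f_=$ in $\mathcal F$. The argument above uses Parseval-type orthogonality of the $\phi_k$ to make the spectral tail uniform in $n$ and thereby pass from finite trigonometric polynomials to the full sum. In a genuinely non-Hilbert Banach setting this step is not automatic from pure point spectrum alone and would require some unconditional or uniformly summable form of the spectral expansion that the theorem's hypotheses do not explicitly provide; the rest of the argument (splitting along the unit circle and picking $\tau$ from a relatively dense set of almost-periods) carries over verbatim once such a uniform estimate is in place.
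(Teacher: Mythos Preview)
Your proposal is correct and follows essentially the same route as the paper: reduce via Lemma~\ref{lem:Kryl} to $\|r_n\|\to 0$, split $f$ along $|\lambda_k|=1$ versus $|\lambda_k|<1$, use decay for the interior part and almost periodicity for the unit-circle part, and conclude via the best-approximation property of the orthogonal projection. Your version is in fact more careful than the paper's on two points: you invoke Bohr's relatively-dense-set characterization to guarantee the claim holds for \emph{all} large $n$ (the paper only exhibits a single large $M$ close to a fixed earlier $N$), and you explicitly flag the implicit Hilbert/Parseval assumption needed to make the spectral tail uniform in $n$, which the paper uses without comment.
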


\begin{proof}
Due to lemma \ref{lem:Kryl}, we only need to prove that, under the assumption on the spectrum, 
\be
\lim_{N\rar \infty}||f\circ T^N-\tilde\c\cdot \tilde \f||=0.
\ee
Due to the assumption in equation (\ref{eq:exp}) we have 
\be
\label{eq:ftn}
f=\sum_{k=1}^\infty c_k \phi_k,
\ee
and thus
\be
\label{eq:ftn1}
f^N=f\circ T^N=\sum_{j=1}^\infty c_k \lambda^N_k\phi_k.
\ee
We split the spectrum of $U$ in ${\mathcal F}$ as $\sigma(U)=\sigma(U)|_{S^1}+\sigma(U)|_{D}$, where $D$ is the interior of the unit disk in the complex plane. Then
\be
\label{eq:ftn2}
f\circ T^N=f^N_{S_1}+f^N_{D}=\sum_{\lambda_k \in \sigma(U)|_{S^1}} c_k \lambda^N_k\phi_k+\sum_{\lambda_j \in \sigma(U)|_{D}} c_j \lambda^N_j\phi_j.
\ee
For sufficiently large $N$, for any $\epsilon/2$
\be
\label{eq:ftn3}
|\sum_{\lambda_j \in \sigma(U)|_{D}} c_j \lambda^N_j\phi_j|\leq \eps/2.
\ee
Also, 
\be
\label{eq:ftn4}
\sum_{\lambda_k \in \sigma(U)|_{S^1}} c_k \lambda^N_k\phi_k
\ee
is an almost periodic function, and thus for sufficiently large $M>N$ we have 
\be
\label{eq:ftn5}
|f^M_{S_1}-f^N_{S_1}|\leq\eps/2.
\ee
 Combining (\ref{eq:ftn3}) and (\ref{eq:ftn5}) proves the claim, since $f^M$ is $\eps$-away from an element $f^j$ of the $\spn({f,...,f^{M-1}})$, and $||f\circ T^M-\tilde\c\cdot \tilde \f||$
 is the minimal distance of $f^M$ to the subspace $\spn({f,...,f^{M-1}})$ that contains $f^j$.

\end{proof}
\begin{rem} The above construction only requires the Krylov sequence, and shows that the finite section approximation reveals the pseudospectrum of the Koopman operator. Thus methods relying on Krylov sequences are ``sampling" the high-dimensional space and can approximate the part of the spectrum contained in their invariant subspace irrespective of the dimension of the problem.

The use of Krylov sequences is also of interest because they span the smallest invariant subset that the observable $f$ belongs to:
\begin{ther} Let $f$ be an observable. Then ${\sf span}({f,f\circ T,...,f\circ T^n,...})$ is the smallest forward invariant subspace of $U$ that contains $f$
\end{ther}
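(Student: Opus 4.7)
The plan is a straightforward double-inclusion argument, showing (i) that the span of the Krylov sequence is itself a forward invariant subspace containing $f$, and (ii) that any other such subspace must contain it.

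First I would verify invariance and containment for $V := \spn(f, f\circ T, f\circ T^2, \ldots) = \spn(f, Uf, U^2 f, \ldots)$. Containment of $f$ is immediate by taking the first element of the generating set. For invariance, take any $g \in V$, write it as a finite linear combination $g = \sum_{k=0}^{M} a_k U^k f$, and observe that by linearity
\begin{equation}
Ug = \sum_{k=0}^{M} a_k U^{k+1} f,
\end{equation}
which is again a finite linear combination of elements of $\{U^k f : k \geq 0\}$, hence lies in $V$. If ``span'' is interpreted here as the closed linear span in the Hilbert space $\mathcal{F}$, then I would invoke boundedness (and hence continuity) of $U$ to pass from the algebraic span to its closure: $U(\overline{V_{\mathrm{alg}}}) \subseteq \overline{U(V_{\mathrm{alg}})} \subseteq \overline{V_{\mathrm{alg}}}$.

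Second I would establish minimality. Let $W$ be any forward invariant subspace of $U$ with $f \in W$. A simple induction on $n$ gives $U^n f \in W$ for all $n \geq 0$: the base case is the hypothesis, and if $U^n f \in W$ then $U^{n+1} f = U(U^n f) \in W$ by forward invariance. Since $W$ is a subspace, it contains every finite linear combination of the vectors $\{U^n f\}_{n\geq 0}$, so $V \subseteq W$. Once again, if the span is the closed span, I would use that a forward invariant subspace is assumed closed (which is the natural convention in this Hilbert-space setting) to conclude $\overline{V_{\mathrm{alg}}} \subseteq W$.

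There is no real obstacle here; the only point requiring mild care is the convention on whether ``span'' is algebraic or topological, and correspondingly whether ``subspace'' means closed subspace. Under either consistent reading, the two inclusions together give equality, and therefore $V$ is the smallest forward invariant subspace of $U$ containing $f$.
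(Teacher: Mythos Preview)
Your argument is correct and follows essentially the same idea as the paper: any forward invariant subspace containing $f$ must contain every iterate $U^n f$, hence the whole Krylov span. The paper phrases the minimality step as a short contradiction (a proper subspace must omit some $f\circ T^j$, contradicting invariance) and leaves the invariance of the span implicit, whereas you give the direct induction and also verify invariance and address the closed-span case explicitly; these are cosmetic differences, not a different route.
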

\begin{proof} Assume not. Then there is $A\subset {\sf span}({f,f\circ T,...,f\circ T^n,...})$, where $A$ is a proper subset, that contains $f$, meaning that there is $f\circ T^j$ for some integer $j$, that is not in $A$. But then $A$ is not invariant, since it contains $f$ and $U^j f$ is not in $A$.
\end{proof}

\end{rem}
\begin{rem} The assumptions in the theorem  \ref{ther:pseudo} are satisfied by any dynamical system with a quasi-periodic attractor with the space of observables being 
an appropriately constructed Hilbert space \cite{Mezic:2019}. However, they exclude systems with mixed or purely continuous spectrum, as evidenced by the example \ref{exa:mix}.
\end{rem}
\subsection{Krylov Sequences from Data}
If $M$ is not a finite discrete set, numerically, we do not have $\tilde \f$ on the whole state space. Instead, we might be able to {\it sample} the function $f$ on a discrete subset
of points $\X=\{\x_1,....,\x_m\}^T\subset M$. We can think of $f$ as a column vector, and form again the $m\times N$ {\it data matrix} $F$  \index{Data matrix} \index{Matrix! data}
\be
F=\left[ \begin{matrix} f_1(\X) \ f_2(\X)\ ...\ f_{N}(\X)\end{matrix}\right].
\label{datamatrix1}
\ee
and its first iterate 
\bea
F'&=&\left[ \begin{matrix} f_2(\X) \ f_3(\X)\ ...\ f_{N+1}(\X)\end{matrix}\right]\nonumber \\
&=&\left[ \begin{matrix} f_1(T\X) \ f_2(T\X)\ ...\ f_{N}(T\X)\end{matrix}\right] \nonumber \\
&=&\left[ \begin{matrix} f_1(\Y) \ f_2(\Y)\ ...\ f_{N}(\Y),\end{matrix}\right].
\eea
where $\Y=T\X$. We have 
\be
F'=FC,
\ee
or
\be
C=F^+ F'
\ee
as could be surmised from (\ref{eq:finsecmat}),
and the following corollary of the lemma \ref{the:find} holds:
\begin{cor} Let  $f\circ T^N$ be in the $\spn(f_1,...f_N)$, and $\rank F=N$. Let $\tilde\lambda,\tilde \e$ be an eigenvalue and the eigenvector of the companion matrix $\tilde U$. Then, an eigenvalue $\tilde \lambda$ of $\tilde U$ is an eigenvalue of $U$, and $\tilde\f(\X)\cdot \tilde \e$ is a sample of the corresponding eigenfunction of $U$ on $\X$.
\end{cor}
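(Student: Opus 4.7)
The plan is to reduce this corollary to Lemma \ref{the:find} by showing that, under the two stated hypotheses, the empirically computed companion matrix $C = F^+ F'$ coincides exactly with the abstract companion matrix $\tilde U$ of equation (\ref{eq:comp}) whose eigenpairs drive that lemma. Everything else is then bookkeeping.

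First I would unpack the two assumptions. Since $f\circ T^N \in \spn(f_1,\dots,f_N)$, there exist scalars $c_1,\dots,c_N$ with $f_{N+1} = \sum_{k=1}^N c_k f_k$ as functions on $M$. Evaluating this identity at each of the sample points $\x_1,\dots,\x_m$ yields the matrix identity $F' = FC$, where $C$ is precisely the companion matrix in (\ref{eq:comp}). The hypothesis $\rank F = N$ means $F^{\dagger}F$ is invertible, so $F^{+}F = (F^{\dagger}F)^{-1}F^{\dagger}F = I_N$, and consequently $F^+ F' = F^+ FC = C$. Thus the companion matrix reconstructed from data is the \emph{same} matrix $\tilde U$ whose eigenpairs are hypothesized, with no truncation error.

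Next I would invoke Lemma \ref{the:find}: because $f\circ T^N \in \spn(f_1,\dots,f_N)$, the function $\tilde\phi = \tilde\e\cdot\tilde\f$ is an eigenfunction of $U$ with eigenvalue $\tilde\lambda$, i.e. $U\tilde\phi = \tilde\lambda\tilde\phi$ holds pointwise on $M$. In particular $\tilde\lambda \in \sigma(U)$. Finally, evaluating $\tilde\phi$ at the sample points gives
\be
\tilde\phi(\X) \;=\; \sum_{k=1}^N \tilde e_k\, f_k(\X) \;=\; F\tilde\e \;=\; \tilde\f(\X)\cdot\tilde\e,
\ee
which identifies $\tilde\f(\X)\cdot\tilde\e$ as a sample of the eigenfunction on $\X$, as claimed.

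The only non-routine step is the argument $F^+ F' = C$; the potential obstacle is that $F^+F = I_N$ requires the tall-matrix interpretation of the Moore--Penrose pseudoinverse, which is valid exactly because $\rank F = N$ (this forces $m \geq N$ and the columns of $F$ to be linearly independent). Without the rank hypothesis, one would only recover a least-squares projection of the true companion coefficients and the conclusion could fail; with it, the reduction to Lemma \ref{the:find} is exact and no residual term (\ref{eq:err}) appears.
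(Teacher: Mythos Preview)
Your proof is correct and follows essentially the same route as the paper's: both arguments reduce to Lemma \ref{the:find} by showing that the data-driven companion matrix coincides exactly with the abstract one (equivalently, that the residual vanishes). The paper's one-line proof (``the vector $\tilde \c$ in the companion matrix is fixed, and thus the residual is zero'') leaves the mechanism implicit; your version makes it explicit via the identity $F^+F' = F^+FC = C$, which is the cleaner way to see why the rank hypothesis is needed.
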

\begin{proof} As soon as we know $N$ samples of the function $f$, the vector $\tilde \c$ in the companion matrix is fixed, and thus the residual is zero.
\end{proof}
When $\x_{k}=T\x_{k-1},k=1,...,m+n-1$, i.e. the sampling points are on a single trajectory, 
the matrix $F$ becomes the Hankel-Takens matrix \index{Hankel-Takens matrix} \index{Matrix! Hankel-Takens}
\be
H=\left[ \begin{matrix} f(\x) \ & \  f(T\x)\ &\ ...\ &\ f(T^{n-1}\x) \\
f(T\x) \ & \  f(T^2\x)\ &\ ...\ &\ f(T^{n}\x) \\ 
\vdots & \cdots & \cdots  & \vdots \\ 
f(T^m\x) \ & \  f(T^{m+1}\x)\ &\ ...\ &\ f(T^{m+n-1}\x) \end{matrix}\right].
\ee
The reason for calling $H$ the Hankel-Takens matrix is that, besides the usual property of Hankel matrices that have
constant skew-diagonal terms - in this case $H_{i,j}=f(T^k\x)$, where $ k=|i|+|j|-2$ - it also satisfies $H_{i,j+1}=H_{i,j}\circ T=H_{i+1,j}$, a property which is related to the Takens embedding \cite{takens:1981,MezicandBanaszuk:2004}.

Let $C$ have distinct eigenvalues. We diagonalize it using 
\be
C=A\Lambda A^{-1},
\ee
The companion matrix is diagonalized by the so-called Vandermonde matrix
\be
A^{-1}={{\begin{bmatrix}1&\lambda _{1}&\lambda _{1}^{2}&\dots &\lambda _{1}^{N-1}\\1&\lambda _{2}&\lambda _{2}^{2}&\dots &\lambda _{2}^{N-1}\\1&\lambda _{3}&\lambda _{3}^{2}&\dots &\lambda _{3}^{N-1}\\\vdots &\vdots &\vdots &\ddots &\vdots \\1&\lambda _{N}&\lambda _{N}^{2}&\dots &\lambda _{N}^{N-1}\end{bmatrix}}.}
\label{eq:van}
\ee
Thus, the Koopman modes of the vector of observables $\tilde \f$ composed of time delays are precisely the columns of the Vandermonde matrix, while the right eigenvectors are the columns of the inverse of the Vandermonde matrix. 
\subsection{ Schmid's  Dynamical Mode Decomposition as a Finite Section Method}

The key numerical issue with the Krylov subspace-based algorithms is the fact that the procedure requires inversion of the Vandermonde matrix (\ref{eq:van}). Since the condition number \index{Condition number} \index{Matrix! condition number} $||A||||A^{-1}||$ (where $||\cdot||$ is the induced matrix norm) of the Vandermonde matrix scales exponentially in its size provided $\lambda_k\neq e^{i\omega}$, for some $k$, even if $|\lambda_k|\approx 1$ \cite{pan:2016}. There are a variety of ways to resolve this issue, and the first one that appeared \cite{Schmid:2010} is the following version of the Koopman operator approximation, based on singular value decomposition. \index{Singular value decomposition}

Let 
\be
F=G\Sigma V^{\dagger}
\ee
be the ``thin" singular value decomposition of the $m\times N$ ``data matrix" $F$, whose columns are samples of functions $f_1,...,f_N$.  The $m\times N$ matrix $G$ and $N \times N$ matrix $V$ are unitary matrices, $V^{\dagger}$ is the conjugate transpose of $V,$ and $\Sigma$ is an $N \times N$ diagonal matrix. Note that 
\be
FV=G\Sigma,
\ee
and thus
\be
F\v_j=\sigma_j\u_j, j=1,...,n
\ee
where $\v_j$ is the $j$-th column of $V$ and $\u_j$ is the $j$-the column of $\u$. Clearly then, 
$\u_j$ are linear combinations of vectors $f_1(\X),f_2(\X),...,f_N(\X)$, and for $m\geq N$ there are $N$ such linear combinations.
We could consider each of these combinations as a sample of a function,
\be
\tilde u_j=\v_j\cdot \tilde \f,
\ee
where $\tilde \f=(f_1,...,f_N)$ is the vector of independent functions. In other words, 
\be
\tilde \u=(u_1,...,u_N)
\ee
 spans ${\cal F}$
and is an orthogonal basis for it. Now $G$ is in fact the data matrix whose columns are $\u_j$'s:
\be
G=[\u_1 \  \u_2 \ ... \ \u_N]=[\u_1(\X)\  \u_2(\X) \ ... \ \u_N(\X)]
\ee
Then, the finite section is 
\bea
\tilde U^S_N&=&G^+ G'=(G^{\dagger}G)^{-1}G^{\dagger}G'\nonumber \\
&=&G^{\dagger} F'V\Sigma^{-1}.
\eea
Now, since 
\be
G^{\dagger}=(FV\Sigma^{-1})^{\dagger}=\Sigma^{-1}V^{\dagger}F^{\dagger},
\ee
\be
G^{\dagger}G=\Sigma^{-1}V^{\dagger}F^{\dagger}FV\Sigma^{-1},
\ee
and thus
\be
(G^{\dagger}G)^{-1}G^{\dagger}=\Sigma V^{\dagger}(F^{\dagger}F)^{-1}V\Sigma \Sigma^{-1}V^{\dagger}F^{\dagger}=\Sigma V^{\dagger}(F^{\dagger}F)^{-1}F^{\dagger},
\ee
we have
\be
\tilde U^S_N=\Sigma V^{\dagger}(F^{\dagger}F)^{-1}F^{\dagger}F'V\Sigma^{-1}=\Sigma V^{\dagger}F^{+}F'V\Sigma^{-1}=\Sigma V^{\dagger}\tilde U^a_NV\Sigma^{-1}.
\ee
Therefore, $\tilde U^S_N$ and $\tilde U^a_N$ are similar matrices that thus have the same spectrum. If $\a_j$ is an eigenvector of $\tilde U^S_N$,
then $V\Sigma^{-1} \a_j$ is an eigenvector of $\tilde U^a_N$, and, according to (\ref{eqn:eigen})
\be
\tilde \phi_j^N= G\a_j 
\ee
is a finite section approximation to an eigenfunction of the Koopman operator.

\section{Weak Eigenfunctions from Data }
\label{sect:weakeig}
In the sections above we  presented finite section approximations of the Koopman operator,  starting from the idea that bounded infinite-dimensional operators are, given a basis,  represented by infinite matrices, and then truncating those. In this section we will present an alternative point of view that  provides additional insights into the relationship between the finite-dimensional approximation and the operator. As a consequence of this approach, we show how the concept of a weak eigenfunction, first discussed in \cite{kordaandmezic:2018}, arises. \index{Weak eigenfunction} \index{Eigenfunction! weak}

We start again with a vector of observables, $\tilde \f=(f_1,...,f_N)$.  Except when we can consider this problem analytically,
we  know the values of observables only on a finite set of points in state space, $\X=\{\x_1,....,\x_m\}$. Assume also that we know the value of $\tilde \f$ at $\Y=\{\y_k\}=\{T(\x_k)\}$. We can think of $f_j(\X)=(f_j(\x_1),...,f_j(\x_m)), j\in \{1,...,N\}$ as a sample of the observable $f_j$ on $\X\subset M$. 

Consider the case $\x_{k+1}=T\x_{k},k=1,...,m-1$. There are many $m\times m$ matrices $A$ such that 
\be
f_j(\Y)^T=Af_j(\X)^T
\ee
One of them is the transpose of the  companion matrix (\ref{eq:comp})
\be
\tilde U^T=\left(
\begin{matrix}
0 &  1  & 0 &\ldots & 0                               \\
0 &  0 & 1& \ldots &  0                                \\
0 &  0 & 0 &\ldots & 0                                 \\
\vdots & \vdots & & \ddots &  \vdots  \\
c_{j1}  &  c_{j2}  & c_{j3} & \ldots  & c_{jm}        \\
 \end{matrix}\right),
 \label{eq:compt}
\ee
but there are many values that $c_{jk},k=1,...m$ can assume, since the only requirement on them is
\be
\sum_{k=1}^{m}c_{jk}f_j(\x_k)=f_j(\y_m)
\ee
and there are $m$ unknowns and $1$ equation that determines them. However, the $c's$ need not depend on $j$, since the operator that maps the vectors $f_j(\X)^T$ to $f_j(\Y)^T$ is not dependent on $j$. Clearly, if there are $m$ observables, then we get
\be
\sum_{k=1}^{m}c_{k}f_j(\x_k)=f_j(\y_m), \ j\in \{1,...,m\},
\ee
and thus we can determine $\c=(c_1,...,c_{m})$ uniquely. 

If the number of observables $N$ is larger than $m$, then $f_{jk}=f_j(\x_k)$ are elements of an 
$N\times m$ matrix\footnote{Note that this data matrix is precisely the transpose of the one we have used before, in (\ref{datamatrix1}).}  $F$ and thus there are not enough components in $\c$ to solve 
\be
F\c=\tilde \f(\y_m)^T.
\ee
This system is overdetermined, so in general does not have a solution. The Dynamic Mode Decomposition method \index{Dynamic Mode Decomposition}
then solves for $\c$ using the following procedure: let $P$ be the orthogonal projection onto span of columns of $F$. Then,
\be
F\c_{MP}=P\tilde \f(\y_m)^T,
\ee
has a solution, provided F has rank $m$:  $P\tilde \f(\y_m)^T$  is an  $N$-dimensional vector in the span of the columns of $F$ and thus can be written as a linear combination of those vectors. In fact, we can write
\be
\c_{MP}=F^+ \tilde \f(\y_m)^T.
\label{MP}
\ee
\index{Moore-Penrose pseudoinverse} 
We now discuss the nature of the approximation of the Koopman operator $U$ by the  companion matrix (\ref{eq:comp})
\be
\tilde U^T=C^T=\left(
\begin{matrix}
0 &  1  & 0 &\ldots & 0                               \\
0 &  0 & 1& \ldots &  0                                \\
0 &  0 & 0 &\ldots & 0                                 \\
\vdots & \vdots & & \ddots &  \vdots  \\
c_{1}  &  c_{2}  & c_{3} & \ldots  & c_{m}        \\
 \end{matrix}\right),
 \label{eq:compt1}
\ee
where $\c=(c_1,...,c_m)=\c_{MP}$ obtained from equation (\ref{MP}). 

Let $\mathcal{S}=\{\x_1,\ldots,\x_m\}$ be an invariant set for $T:M\rar M$, where $M$ is a measure space, with measure $\mu$. Consider the space  ${\mathcal{C}}|_\mathcal{S}$, of continuous functions in $L^2(\mu)$ restricted to $\mathcal{S}$.  This is an $m$-dimensional vector space. The restriction $ U|_\mathcal{S}$ of the Koopman operator to ${\mathcal{C}}|_\mathcal{S}$,  is then a finite-dimensional linear operator that can be represented in a basis by an $m\times m$ matrix. An explicit example is given when $\x_j,j=1,\ldots, m$ represent successive points on a periodic trajectory, and the resulting matrix representation in the standard basis is the  $m\times m$ cyclic permutation matrix
\be
 \Pi=\left(
\begin{matrix}
0 &  1  & 0 &\ldots & 0                               \\
0 &  0 & 1& \ldots &  0                                \\
0 &  0 & 0 &\ldots & 0                                 \\
\vdots & \vdots & & \ddots &  \vdots  \\
1  &  0 & 0  & \ldots  & 0       \\
 \end{matrix}\right),
 \label{eq:compt1}
\ee

If $\mathcal{S}$ is not an invariant set, an $m\times m$ approximation of the reduced Koopman operator can still be provided. Namely, if we know $m$ independent functions' restrictions $(f_j)|_\mathcal{S}$, $j=1,\ldots,m$ in  ${\mathcal{C}}|_\mathcal{S},$ and we also know $f_j(T\x_k)$, $j,k\in\{1,\ldots,m\}$, we can provide a matrix representation of $ U|_\mathcal{S}$. However, while in the case  where $\mathcal{S}$ is an invariant set, the iterate of any function in ${\mathcal{C}}|_\mathcal{S}$ can be obtained in terms of the iterate of $m$ independent functions, for the case when $\mathcal{S}$ is not invariant this is not necessarily so. Namely, the fact that $\mathcal{S}$ is not invariant  means that functions in ${\mathcal{C}}|_\mathcal{S}$ do not necessarily experience linear dynamics under $ U|_\mathcal{S}$. However, one can take $N$ observables $f_j$, $j=1,\ldots,N$, where $N>m$, and approximate the nonlinear dynamics using linear regression on $\tilde \f(\X)\equiv (\f(\x_1),\ldots,\f(\x_m)),$ where $\f(\cdot)=(f_1(\cdot),\ldots,f_N(\cdot))^{\msf T}$ -- i.e by finding an $m\times m$ matrix $C$ that gives the best approximation of the data in the Frobenius norm, 
\begin{equation}
C^T =\underset{B\in\C^{m\times m}}\argmin ||\f(T\x)- \f(\x) {B}||_F \equiv \underset{B\in\C^{m\times m}}\argmin \| (f_j(T\x_k))_{j,k=1,1}^{n,m} -  (f_j(\x_k))_{j,k=1,1}^{n,m} {B}\|_F.
\end{equation}
We have the following:
\begin{ther} Let $T:M\rar M$ be a measure $\mu$-preserving transformation on a metric space $M$, and let $\mathcal{S}_m=\{\x_j\}, j=1,...,m$ be a trajectory such that, when $m\rar\infty$, $\mathcal{S}_m$ becomes dense in a compact invariant set $A\in M$. Then, for any $N$-vector of observables $\f\in {\mathcal{C}}|_{\mathcal{S}_m},N\geq m$, 
we have 
\be
\lim_{m\rar \infty} |U|_{\mathcal{S}_m}\f-C\f|=0
\ee
\end{ther}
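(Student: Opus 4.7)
The approach is to exploit the fact that $\mathcal{S}_m$ is a single orbit: since $T\x_k=\x_{k+1}$ for $k=1,\ldots,m-1$, every column but the last of the ``next-step'' data matrix $\f(T\X)$ coincides literally with a column of $\f(\X)$. The least-squares problem defining $C$ is therefore trivial on its first $m-1$ columns, and the whole error reduces to approximating the single vector $\f(\x_{m+1})\in\C^N$ by some linear combination of columns of $\f(\X)$.

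First I would exhibit an explicit competitor $\tilde B\in\C^{m\times m}$ for the minimization defining $C$. Take the $k$-th column of $\tilde B$ to be the standard basis vector $\e_{k+1}$ for $k=1,\ldots,m-1$, so that the first $m-1$ columns of $\f(\X)\tilde B$ match those of $\f(T\X)$ exactly. For the last column, density of $\mathcal{S}_m$ in $A$ supplies, for each $m$, an index $j(m)\in\{1,\ldots,m\}$ with $d(\x_{j(m)},\x_{m+1})\to 0$; I set the last column of $\tilde B$ equal to $\e_{j(m)}$. Because each component of $\f$ is continuous and $A$ is compact, uniform continuity gives $\|\f(\x_{j(m)})-\f(\x_{m+1})\|\to 0$, and hence
\[
\|\f(T\X)-\f(\X)\tilde B\|_F=\|\f(\x_{m+1})-\f(\x_{j(m)})\|\longrightarrow 0.
\]

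The conclusion then follows from the optimality of $C^T$: since $C^T=\argmin_{B}\|\f(T\x)-\f(\x)B\|_F$, we have
\[
\|\f(T\X)-\f(\X)C^T\|_F\leq\|\f(T\X)-\f(\X)\tilde B\|_F\longrightarrow 0,
\]
and this Frobenius residual is exactly $|U|_{\mathcal{S}_m}\f-C\f|$ measured in Frobenius norm on the finite-dimensional space of $N$-vector-valued functions on $\mathcal{S}_m$; any equivalent norm on this finite-dimensional space yields the same limit.

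The step I expect to be the main obstacle is the density/continuity argument for the last column: specifically, justifying that $\x_{m+1}$ lies in $A$ so that uniform continuity of $\f$ on $A$ applies, and that the required near-neighbor can be found with index in $\{1,\ldots,m\}$ rather than being the trivial choice $j(m)=m+1$. Invariance of $A$ together with the density of the orbit in $A$ forces $\x_{m+1}\in A$, and the density hypothesis on $\mathcal{S}_m$ then supplies the required $j(m)\leq m$. A minor technical annoyance arises if the orbit only approaches $A$ asymptotically rather than lying inside it, in which case uniform continuity of $\f$ would instead be invoked on a compact neighborhood of $A$ absorbing a tail of the orbit.
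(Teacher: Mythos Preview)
Your proposal is correct and follows essentially the same approach as the paper: both arguments reduce the discrepancy $|U|_{\mathcal{S}_m}\f-C\f|$ to the single ``last column'' residual and then use density of the orbit in $A$ together with continuity of the observables to make that residual small. The paper's proof is terser---it simply asserts that for large $m$ one has $|\x_m-\x_j|<\eps_M$ for some earlier $\x_j$ and then bounds the error by $D\eps_M$---whereas you make the mechanism explicit by constructing a competitor $\tilde B$ and invoking the optimality of $C^T$, and you also flag the technical points (that $\x_{m+1}\in A$, uniform continuity on $A$) that the paper leaves implicit.
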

\begin{proof} By density of $\mathcal{S}_\infty$, for sufficiently large $M$, $m\geq M$ implies $|\x_m-\x_j|<\eps_M$ for some $\x_j\in {\x_1,...,\x_{m-1}}$. By continuity of observables, 
\be
|U|_{\mathcal{S}_m}\f-C\f|\leq D\eps_M
\ee
for some constant $D$. Taking $M$ sufficiently large makes $\eps_M\rar 0$.
\end{proof}

Consider an m-dimensional eigenvector $\tilde \e=(e_1,...,e_m)$ of $\tilde U^T$, associated with the eigenvalue $\lambda$. 
Since the eigenvector satisfies
\be
\tilde U^T \tilde \e=\lambda \tilde \e
\ee
we have 
\be 
\tilde \e_{k+1}=\lambda \tilde \e_{k},\ k=1,...,m-2.
\ee
Thus $\tilde \e$ can be considered as an eigenfunction on the finite set $\x_1,...,\x_{m-1}$. On the last point of the sample, $\x_m$, 
we have 
\be
\sum_{j=1}^{m}c_j\tilde \e_{j}=\lambda\tilde \e_{m}
\ee
Let us now consider the concept of the  weak eigenfunction, \index{Eigenfunction! weak} or eigendistribution. Let $\nu$ be some prior measure of interest on $M$. Let $\phi$ be a bounded function that satisfies $\phi\circ T=\lambda\phi$. We construct the functional $L$ on $C(M)$ by defining
\be
L(h)= \int_{M}h\phi d\nu.
\ee
Set $UL(h)=\int_{M}h\phi (T\x)d\nu$ and we get
\be
UL(h)=\int_{M}h\phi (T\x)d\nu =\lambda \int_{M}h\phi d\nu=\lambda L(h).
\label{eq:weakeig}
\ee
Clearly, this is satisfied if $\phi$ is a continuous eigenfunction of $U$ at eigenvalue $\lambda$.  But. equation (\ref{eq:weakeig}) is applicable for cases with much less regularity. Namely, if $\mu$ is a measure and 
\be L(f)=\int f(\x)d\mu(\x)
\ee
the associated linear functional,
then we can define the action of $U$ on $L$ by
\be
UL(f)=\int f(\x)d\mu(T\x).
\ee

Consider, for example, a set of points $\x_k,k\in \bN^+$ and assume that for every continuous $h$ there exists the limit 
\be
L(h)=\lim_{K\rar\infty}\frac{1}{K}\sum_{k=1}^K h(\x_k)\tilde \e(\x_k).
\ee
Then, by the Riesz representation theorem \index{Riesz theorem} there is a measure $\mu$, such that
\be
L(h)=\int_M hd\mu.
\ee
\begin{defi}
Let a measure $\mu$ be such that the associated linear functional $L$ satisfies 
\be
UL=\lambda L,
\ee
for some $\lambda \in \bC$. Then $\mu$ is called a  weak eigenfunction of $U$.
\end{defi}
Now we have
\be
UL(h)=\lim_{K\rar\infty}\frac{1}{K}\sum_{k=1}^K h(\x_{k})\tilde \e(\x_{k+1})=\lambda\lim_{K\rar\infty}\frac{1}{K}\sum_{k=1}^K h(\x_{k})\tilde \e(\x_{k})=\lambda L(h),
\ee
proving the following theorem:
\begin{ther} Consider a set of points $\x_k,k\in \bN^+$, on a trajectory of $T$,  and assume that for every continuous $h$ there exists the limit 
\be
L(h)=\lim_{K\rar\infty}\frac{1}{K}\sum_{k=1}^K h(\x_k)\tilde \e(\x_k),
\ee
where \be\tilde \e(\x_k)=\lambda \tilde \e(\x_{k-1}).\ee Then the $\mu$ associated with $L(h)$ by
\be
L(h)=\int_M hd\mu,
\ee
is a weak eigenfunction of $U$ associated with the eigenvalue $\lambda$.
\end{ther}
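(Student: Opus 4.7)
The proof essentially packages the one-line computation displayed immediately before the theorem, so the plan is short and my main task is to say carefully what needs to be checked.

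First, I would verify that $L$ gives a well-defined bounded linear functional on $C(M)$, so that the Riesz--Markov representation theorem actually produces the advertised $\mu$. Linearity in $h$ is immediate from linearity of the Cesàro limit. For boundedness, I would write
\[
|L(h)| \;\leq\; \|h\|_\infty \cdot \limsup_{K\to\infty}\frac{1}{K}\sum_{k=1}^{K}|\tilde\e(\x_k)|,
\]
and note that the recurrence $\tilde\e(\x_k)=\lambda\tilde\e(\x_{k-1})$ forces $|\tilde\e(\x_k)| = |\lambda|^{k-1}|\tilde\e(\x_1)|$; hence the Cesàro sum is finite precisely when $|\lambda|\leq 1$, which is the only case of interest (the case $|\lambda|>1$ makes $L$ identically zero on continuous $h$ unless $\tilde\e(\x_1)=0$, a trivial situation). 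Assuming the hypothesis that $L(h)$ exists for every $h\in C(M)$, $L$ is then a bounded linear functional on $C(M)$, and on the compact metric space $M$ the Riesz representation theorem yields a unique regular complex Borel measure $\mu$ with $L(h)=\int_M h\,d\mu$.

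Next, I would compute $UL(h)$ using the definition introduced earlier in the section, where $U$ acts on the functional $L$ by shifting the density $\tilde\e$ one step forward along the orbit; in the empirical formulation this means
\[
UL(h) \;=\; \lim_{K\rar\infty}\frac{1}{K}\sum_{k=1}^{K} h(\x_k)\,\tilde\e(\x_{k+1}).
\]
Substituting the hypothesis $\tilde\e(\x_{k+1})=\lambda\,\tilde\e(\x_k)$ and pulling $\lambda$ out of the sum gives
\[
UL(h) \;=\; \lambda \lim_{K\rar\infty}\frac{1}{K}\sum_{k=1}^{K} h(\x_k)\,\tilde\e(\x_k) \;=\; \lambda L(h).
\]
Since this holds for every $h\in C(M)$, the measure $\mu$ associated with $L$ satisfies $UL=\lambda L$, which is exactly the definition of a weak eigenfunction of $U$ at eigenvalue $\lambda$.

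There is no real obstacle here: the algebra is a single substitution, and the only genuinely delicate point is a boundary/endpoint issue in passing from the index $k$ to $k+1$. Writing the shifted sum as $\frac{1}{K}\sum_{k=2}^{K+1} h(\x_{k-1})\tilde\e(\x_k)$ and comparing against the original sum shows the discrepancy is the boundary term $\frac{1}{K}\big(h(\x_K)\tilde\e(\x_{K+1}) - h(\x_1)\tilde\e(\x_1)\big)/\lambda$ modulo reindexing; boundedness of $h$ on the compact set $M$ together with the $|\lambda|\leq 1$ estimate on $|\tilde\e(\x_k)|$ makes this term $O(1/K)$ and therefore vanish in the Cesàro limit. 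Once that boundary bookkeeping is dispatched, the proof is complete.
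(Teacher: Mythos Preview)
Your proposal is correct and follows the same one-line computation the paper uses: substitute $\tilde\e(\x_{k+1})=\lambda\tilde\e(\x_k)$ directly into $UL(h)=\lim_{K\to\infty}\frac{1}{K}\sum_{k=1}^{K}h(\x_k)\tilde\e(\x_{k+1})$ and factor out $\lambda$. Your added checks (linearity, boundedness, Riesz--Markov) are more careful than the paper, which simply asserts the representation; note, however, that your final ``boundary/endpoint'' paragraph is unnecessary and slightly muddled---the substitution is termwise and exact, so no reindexing or boundary bookkeeping is required at all.
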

From the above, it follows that the left eigenvectors of $\tilde U^T$ are approximations of the associated (possibly weak) Koopman modes, as assume $\l$ is such an eigenvector, 
\be
\l_j\tilde U=\lambda_j\l_j.
\ee
then, 
\be
\left<\l,f_j(\X)\right>
\ee
Is the projection of $f_j(\X)$ on the eigenspace spanned by the eigenvector $\e_j$. Moreover, since
\be
\l_j=\lambda\l_{j+1}-c_m \l_m
\ee
the statement can be obtained in the limit $K\rar \infty$ by the so-called Generalized Laplace Analysis (GLA) that we describe in the  section \ref{sect:GLA}.
\begin{rem}The standard interpretation of the Dynamic Mode Decomposition (e.g. on Wikipedia) was in some way a transpose of the one presented here: the observables $f_1^T,...f_m^T$ (interpreted as column vectors) were assumed to be related by a matrix $A:f_{j+1}=Af_j$. Instead, in the nonlinear, Koopman operator interpretation, each row is mapped into its image, and this allows interpretation on the space of observables. This is particularly important in the context of evolution equations, for example fluid flows, where the evolution of observables field - the field of velocity vectors at different spatial points - is not evolving linearly.
\end{rem}

\section{Conclusions}
\label{sect:concl}
In this paper we pursued analysis of two of the major approaches to computation of Koopman operator spectrum: the Generalized Laplace Analysis and the finite section method. We derived approximation results and reinterpreted finite section as a method acting on samples of continuous functions on the state space. The example of a chaotic system with continuous spectrum shows how a failure of the finite section method can occur for that class of systems.
The question of choice of observables is often raised in the context of finite-section approximations such as the EDMD. Specifically, the number of basis functions - e.g. Fourier basis on a box in a $d$-dimensional space - selected as observables can increase exponentially with the dimension $d$. The pseudospectral result proven here shows that choosing time-delayed observables avoids this issue, making time-delayed observations a natural choice. However, it is clear from the example we gave that the finite section method can fail to converge spectrally for systems with continuous spectrum.

One can understand the Krylov subspace approach as sampling by dynamics in the observables space. The weak eigenfunction approach is based on sampling in state space.
Thus, both techniques avoid the curse of dimensionality that methods like EDMD potentially introduce.

{\bf Acknowledgements} I am thankful to Hassan Arbabi and Mathias Wanner for carefully reading the paper and useful comments. This work was supported in part by the DARPA contract HR0011-16-C-0116 and ARO grants W911NF-11-1-0511 and W911NF-14-1-0359.

\label{sect:conc}
\bibliographystyle{unsrt}
\bibliography{MOTDyS,alex,KvN,darpabdd}

\end{document}